\theoremstyle{plain}
\newcommand{\ds }{\ensuremath{\displaystyle}}
\newcommand{\R }{\ensuremath{\mathbb R}}
\newcommand{\C }{\ensuremath{\mathbb C}}
\newcommand{\Q }{\ensuremath{\mathbb Q}}
\newcommand{\Z }{\ensuremath{\mathbb Z}}
\newcommand{\N }{\ensuremath{\mathbb N}}
\newcommand{\T }{\ensuremath{\mathbb T}}
\newcommand{\HHH }{\ensuremath{\mathbb H}}
\DeclareMathOperator{\rel}{\vphantom{y}rel}
\DeclareMathOperator{\codim}{codim}
\DeclareMathOperator{\ch}{ch}
\DeclareMathOperator{\supp}{supp}
\DeclareMathOperator{\pr}{pr}
\newcommand{\aaa }{\ensuremath{\mathcal{A}}}
\newcommand{\oo }{\ensuremath{\mathcal{O}}}
\newcommand{\hh }{\ensuremath{\mathcal{H}}}
\newcommand{\ii }{\ensuremath{\mathcal{I}}}
\newcommand{\LL }{\ensuremath{\mathcal{L}}}
\newtheorem{theorem}{Theorem}[section]
\newtheorem{lemma}[theorem]{Lemma}
\newtheorem{proposition}[theorem]{Proposition}
\theoremstyle{definition}}
\theoremstyle{definition}}
\theoremstyle{definition}}
\theoremstyle{definition}}
\theoremstyle{definition}\newtheorem{definition}[theorem]{Definition}}
\theoremstyle{definition}}
\theoremstyle{definition}\newtheorem{remark}[theorem]{Remark}}
\address{Institut de Math\'{e}matiques de Jussieu, UMR 7586\\
Case 247\\ Universit\'{e} Pierre et Marie Curie\\
4, place Jussieu\\
F-75252 Paris Cedex 05\\
France}
\email{jgrivaux@math.jussieu.fr}
\newcommand{\xnj}[1]{X^{[n]}_{J^{\re}_{#1}}}
\newcommand{\oti}{\ensuremath{\otimes}}
\newcommand{\he}{^{\vphantom{[n]}} }
\newcommand{\be}{_{\vphantom{[n]}} }
\newcommand{\ba}[1]{\ensuremath{\overline{#1}}}
\newcommand{\ti }[1]{\ensuremath{\widetilde{#1}}}
\newcommand{\rb }{\ensuremath{\raisebox}}
\newcommand{\tim }{\ensuremath{\times}}
\newcommand{\ee }{\ensuremath{^{\, *}}}
\newcommand{\bop }{\ensuremath{\bigoplus\limits}}
\newcommand{\suq }{\ensuremath{\subseteq}}
\def\apl#1#2#3{#1\mkern -2 mu:\mkern - 4 mu
\xymatrix@C=17pt{#2\!\ar[r]&\!#3}
}
\def\aplpt#1#2#3#4{#1\mkern -4 mu:\mkern - 8 mu
\xymatrix@C=17pt{#2\!\ar[r]&\!#3#4}
}
\def\sutrgd#1#2#3{
\xymatrix@C=17pt{
0\ar[r]&#1\ar[r]&#2\ar[r]&#3\ar[r]&0
}
}
\def\sutrgdpt#1#2#3#4{
\xymatrix@C=17pt{
0\ar[r]&#1\ar[r]&#2\ar[r]&#3\ar[r]&0#4
}
}
\def\sutrgpt#1#2#3#4{
\xymatrix@C=17pt{
0\ar[r]&#1\ar[r]&#2\ar[r]&#3#4
}
}
\def\sutr#1#2#3{
\xymatrix@C=17pt{
#1\ar[r]&#2\ar[r]&#3
}
}
\def\sutrg#1#2#3{
\xymatrix@C=17pt{
0\ar[r]&#1\ar[r]&#2\ar[r]&#3
}
}
\def\sutrd#1#2#3{
\xymatrix@C=17pt{
#1\ar[r]&#2\ar[r]&#3\ar[r]&0
}
}
\def\fl{\xymatrix@C=17pt{
\ar[r]&
}}
\def\flcourte{\xymatrix@C=10pt{
\ar[r]&
}}
\def\flgd#1#2{\xymatrix@C=17pt{#1\!
\ar[r]&\!#2
}}
\def\flcourtegd#1#2{\xymatrix@C=15pt{\!\!#1\!
\ar[r]&\!#2\!\!
}}
\def\flgdin#1#2{\xymatrix@C=3ex{\!\!\scriptstyle{#1}
\ar[r]&\scriptstyle{#2}\!\!\!
}}
\def\fldouble{\xymatrix@1{
\ar@{->>}[r]&
}}
\def\fle#1#2{
\xymatrix@1{
#1
\ar[r]&#2
}}
\def\flex#1#2#3{
{\xymatrix@1{
#1
\ar[r]^{#3}&#2
}}
}
\def\fledouble#1#2{
{\xymatrix@1{
#1
\ar@{->>}[r]&{#2}
}}
}
\def\flexdouble#1#2#3{
{\xymatrix@1{
#1
\ar@{->>}[r]^{#3}&{#2}
}}
}
\def\diagca#1#2#3#4#5#6#7#8{\xymatrix@1{
#1
\ar[d]_{#6}\ar[r]_{#5}&#2\ar[d]_{#7}\\
#3
\ar[r]_{#8}&#4
}}
\def\sutrois#1#2#3{
{\xymatrix@1{
#1
\ar[r]&#2
\ar[r]&#3
}}
}
\def\sutroiszerogdprime#1#2#3{
{\xymatrix@1{
0
\ar@<-0.5mm>[r]&#1
\ar@<-0.5mm>[r]&#2
\ar@<-0.5mm>[r]&#3
\ar@<-0.5mm>[r]&0
}}
}
\def\fleprime#1#2{
\xymatrix@1{
#1
\ar[r]&#2
}}
\def\sutroisnom#1#2#3#4#5{
{\xymatrix@1{
#1
\ar[r]^{#4}&#2
\ar[r]^{#5}&#3
}}
}
\def\sutroiszerogd#1#2#3{
{\xymatrix@1{
0
\ar[r]&#1
\ar[r]&#2
\ar[r]&#3
\ar[r]&0
}}
}
\def\strgdexp#1#2#3#4#5#6{
{\xymatrix@1{
0
\ar[r]&\rb{#2ex}{$#1$}
\ar[r]&\rb{#4ex}{$#3$}
\ar[r]&\rb{#6ex}{$#5$}
\ar[r]&0
}}
}
\def\sutroiszerog#1#2#3{
{\xymatrix@1{
0
\ar[r]&#1
\ar[r]&#2
\ar[r]&#3
}}
}
\def\suxtroiszerogd#1#2#3#4#5{
{\xymatrix@1{
0
\ar[r]&#1
\ar[r]^{#4}&#2
\ar[r]^{#5}&#3
\ar[r]&0
}}
}
\def\suquatre#1#2#3#4{
{\xymatrix@1{
#1
\ar[r]&#2
\ar[r]&#3
\ar[r]&#4
}}
}
\def\suxquatre#1#2#3#4#5#6#7{
{\xymatrix@1{
#1
\ar[r]^{#5}&#2
\ar[r]^{#6}&#3
\ar[r]^{#7}&#4
}}
}
\def\sucinq#1#2#3#4#5{
{\xymatrix@1{
#1
\ar[r]&#2S^{n}X
\ar[r]&#3
\ar[r]&#4
\ar[r]&#5
}}S^{n}X
}
\def\suxcinq#1#2#3#4#5#6#7#8#9{
{\xymatrix@1{
#1
\ar[r]^{#6}&#2
\ar[r]^{#7}&#3
\ar[r]^{#8}&#4
\ar[r]^{#9}&#5
}}
}
\DeclareMathOperator{\id}{id}
\newcommand{\xg}{\ensuremath{\mathfrak{X} }}
\newcommand{\re}{\ensuremath{\, \rel}}
\newcommand{\ci}{\ensuremath{\mathcal{C}^{\mkern 1 mu\infty\vphantom{_{p}}}}}
\newcommand{\snx}{S^{\mkern 1 mu n\!}\be X}
\newcommand{\sn}{S^{\mkern 1 mu n\!}\be}
\newcommand{\xn}{X^{[n]}\be}
\newcommand{\hb}{H\ee\be}
\newcommand{\enn}{E^{\, [n] }\be}
\newcommand{\xb}{\ub{x}}
\newcommand{\yb}{\ub{y}}
\newcommand{\zb}{\ub{z}}
\newcommand{\ub}[1]{\underline{\vphantom{!}\vphantom{y}#1}}
\newcommand{\jre}{J^{\re }\be}
\newcommand{\got}{G\"{o}ttsche}
\newcommand{\nbh}{neighbourhood}
\newcommand{\npn}{[n'\!\!,\, n]}
\newcommand{\xna}[1]{X^{[#1]}\be}
\newcommand{\xnb}[1]{X^{#1}\be}
\newcommand{\hs}{H_{*}\he}
\newcommand{\pa}{\partial}
\title[Topological properties of punctual Hilbert schemes]{Topological properties of punctual Hilbert schemes of almost-complex fourfolds (I)}
\author{Julien Grivaux}
\begin{document}
\setcounter{section}{1}
\begin{abstract}
In this article, we study topological properties of Voisin's punctual Hilbert schemes of an almost-complex fourfold $X$. In this setting, we compute their Betti numbers and construct Nakajima operators. We also define tautological bundles associated with any complex bundle on $X$, which are shown to be canonical in $K$-theory.
\end{abstract}
\maketitle
\tableofcontents
\thispagestyle{empty}
\section{Introduction}
Our aim in this paper is to extend some properties of
the cohomology of
punctual Hilbert schemes on smooth projective surfaces to the case of almost-complex compact manifolds of dimension four.
\par\medskip
Let $X$ be a smooth complex projective surface. For any integer $n\in\N\ee\be$, the punctual Hilbert scheme $\xn$ is defined as the set of all $0$-dimensional subschemes of $X$ of length $n$. A theorem of Fogarty \cite{SchHilFo} states that $\xn$ is a smooth irreducible projective variety of complex dimension $2n$. The Hilbert-Chow map $\apl{HC}{\xn}{\snx}$ defined by $HC(\xi )=\sum_{x\in\supp(\xi )}\ell_{x}\he\bigl( \xi \bigr)x$ is a desingularization of the symmetric product $\snx$. This implies that the varieties $\xn$ can be seen as smooth compactifications of the sets of distinct unordered $n$-tuples of points in $X$.
\par\medskip
Voisin constructed in \cite{SchHilVo1} punctual Hilbert schemes $\xn$ when $X$ is only supposed to be a smooth almost-complex compact fourfold. This construction produces almost-complex Hilbert schemes $\xn$ which are differentiable manifolds of dimension $4n$ endowed with a stable almost-complex structure. Moreover there exists a continuous Hilbert-Chow map $\apl{HC}{\xn}{\snx}$ whose fibers are homeomorphic to the fibers of the Hilbert-Chow map in the integrable case.
\par\medskip
Using ideas of Voisin concerning relative integrable structures, we generalize to the almost-complex setting some results already known in the integrable case. In this paper, we will mainly focus on the additive structure of the cohomology groups of $\xn$ with rational coefficients. We first expose Voisin's construction and we study the local topological structure
of the Hilbert-Chow map. This allows us to compute the Betti Numbers
of $\xn$, thus extending \got's formula \cite{SchHilGo2}, \cite{SchHilGoSo} to the almost-complex case.
\par\bigskip
\noindent\textbf{Theorem 1.}
 \textit{Let $(X,J)$ be an almost-complex compact fourfold and, for any positive integer $n$, let
$\bigl( b_{i}\he(\xn)\bigr)_{i=0,\dots,4n}\he$ be the sequence of Betti numbers of the almost-complex Hilbert scheme $\xn$. Then the generating function for these Betti numbers is given by the formula }
\[
\sum_{n=0}^{\infty }\, \sum_{i=0}^{4n}b_{i}\he\bigl( \xn\bigr)t^{i}q^{n}=\prod_{m=1}^{\infty }\dfrac
{\bigl[(1+t^{2m-1}q^{m})(1+t^{2m+1}q^{m})\bigr]^{b_{1}(X)}}
{(1-t^{2m-2}q^{m})(1-t^{2m+2}q^{m})(1-t^{2m}q^{m})^{b_{2}(X)}}\cdot
\]
\par\bigskip
The proof of Theorem 1 relies on a topological version of the decomposition theorem of \cite{SchHilDBBG} for semi-small maps, which is due to Le Potier \cite{SchHilLP}.
\par\medskip
The second part of the paper is devoted to the definition and the study of the Nakajima operators $q_{i}\he(\alpha )$ of an arbitrary almost-complex compact fourfold $X$.
These operators are obtained as actions by correspondence of incidence varieties, constructed in the almost-complex setting. The incidence varieties are stratified topological spaces locally modelled on analytic spaces.
We prove in this context the Nakajima commutation relations \cite{SchHilNa}:
\par\medskip
\noindent\textbf{Theorem 2.}
\textit{For any pair $(i,j)$ of integers and any pair $(\alpha ,\beta )$ of cohomology classes in $H\ee\be(X,\Q)$ we have}
\[
\bigl[ \mathfrak{q}_{i}\he(\alpha) ,\mathfrak{q}_{j}\he(\beta )\bigr]=i\, \delta _{i+j,0}\he\, \left( \int_{X}\alpha \beta\right)\id .
\]
\par\bigskip
\noindent It follows from Theorems 1 and 2 that the Nakajima operators produce an irreducible representation of the Heisenberg super-algebra $\mathcal{H}(H^{*}\be(X,\Q))$ on
$\bop_{n\in\N}H^{*}\be(\xn,\Q)$.
\par\medskip
In the last part, we explain how to construct tautological complex bundles $E^{[n]}\be$ on the almost-complex Hilbert schemes $\xn$ starting from a complex vector bundle $E$ on $X$. To do so, we use variable holomorphic structures on $E$ in the same spirit as the variable holomorphic structures on $X$ used in Voisin's construction to define $\xn$.
\par\medskip
If $X$ is projective, Nakajima's theory as well as the tautological bundles are the fundamental tools to describe the ring structure of $H\ee(\xn,\Q)$ (see \cite{SchHilLe}). In a forthcoming paper, we will use the analogous almost-complex objects we have constructed here to compute the ring structure of $H\ee(\xn,\Q)$ when $X$ is a compact symplectic fourfold with vanishing first Betti number.
\par\bigskip
\noindent\textbf{Acknowledgement.} I want to thank my advisor Claire Voisin whose work on the almost-complex punctual Hilbert scheme is at the origin of this article. Her deep knowledge of the subject and her numerous advices have been most valuable to me. I also wish to thank her for her kindness and her patience.
\section{The Hilbert scheme of an almost-complex compact fourfold}\label{SectionUn}
\subsection{Voisin's construction}\label{SubsectionUn}
In this section, we recall Voisin's construction of the almost-complex
Hilbert scheme and establish some complementary results. Let $(X,J)$ be an
almost-complex compact fourfold. The symmetric product $\snx$ will be endowed
with the sheaf $\ci_{\snx}$ of $\mathcal{C}^{\infty}\be$  functions on $X^{n}$ invariant under
$\mathfrak{S}_{n}\he$.
Let us introduce the incidence set
\begin{equation}\label{EncoreUne}
 Z_{n}\he=\{(\xb,p)\in\snx\tim X,\ \textrm{such that}\ p\in\xb\}.
\end{equation}
\begin{definition}\label{NouvDefUn}
 For $\varepsilon >0$, let $\mathcal{B}_{\varepsilon }\he$ be the set of pairs $(W,\jre)$ such that
\begin{enumerate}
 \item [(i)] $W$ is a \nbh\ of $Z_{n}\he$ in $\snx\tim X$,
\item[(ii)] $\jre$ is a relative integrable complex structure on the fibers of $\apl{\pr_{1}}{W}{\snx}$ depending smoothly on the parameter $\xb$ in $\snx$,
\item[(iii)] if $g$ is a fixed riemannian metric on $X$, $\sup_{\xb\in\snx,\, p\in W_{\xb}\he}\he
||J^{\,\textrm{rel}}_{\xb}(p)-J_{\xb}\he(p)||\,_{g}\he\le\varepsilon $.
\end{enumerate}
\end{definition}
For $\varepsilon $ small enough, $\mathcal{B}_{\varepsilon }\he$ is connected and weakly contractible (i.e. $\pi _{i}\he(\mathcal{B}_{\varepsilon }\he)=0$ for $i\ge 1$). We choose such a $\varepsilon $ and write $\mathcal{B}$ instead of $\mathcal{B}_{\varepsilon }\he$.
\par\medskip
Let $\apl{\pi }{(W_{\rel}^{[n]},J^{\re }\be)}{\snx}$ be the relative Hilbert
scheme of $(W,\jre)$ over $\snx$. The fibers of $\pi $ are the smooth
analytic sets
$(W_{\ub{x}}^{[n]},J_{\ub{x}}^{\re })$, $\xb\in\snx$. Let
$\apl{{HC}_{\rel}\he}{W_{\rel}^{[n]}}{S_{\rel}^{n}W}$ be the relative
Hilbert-Chow morphism over $\snx$.
\begin{definition}\label{DefUnSectionUnSubsectionUn}
 The \emph{topological Hilbert scheme} $\xnj{}$ is
$(\pi ,\pr_{2}\circ HC_{\rel}\he)^{-1}\be\bigl( \Delta _{\snx}\he\bigr)$, where $\Delta
_{\snx}\he$ is the diagonal of $\snx$. More explicitely,
\[
\xnj{}=\{(\xi ,\ub{x})\ \textrm{such that}\ \ub{x}\in\snx,\
\xi \in(W_{\ub{x}}^{[n]},J_{\ub{x}}^{\re })\ \textrm{and}\ HC(\xi )=\xb\,
\}.
\]
\end{definition}
To put a differentiable structure on
$\xnj{}$, Voisin uses specific relative
integrable structures which are invariant by a compatible system of retractions
on the strata of $\snx$.
These relative structures are differentiable for a differentiable structure
$\mathfrak{D}_{\!J}\he$
on $\snx$ which depends on $J$ and is weaker than the quotient differentiable structure, i.e.
$\mathfrak{D}_{\!J}\he\suq\ci_{\snx}$.
The main result of Voisin is the following:
\begin{theorem}\cite{SchHilVo1}, \cite{SchHilVo2}\label{MainVoisin}
$\he$\par
\begin{enumerate}
\item[(i)] $\xn$ is a $4n$-dimensional differentiable manifold, well-defined modulo
diffeomorphisms homotopic to identity.
\item[(ii)] The Hilbert-Chow map $\apl{HC}{\xna{n}}{\bigl( \snx,\mathfrak{D}_{\!J}\he\bigr)}$ is differentiable and its fibers $HC^{-1}\be(\xb)$ are homeomorphic to the fibers of the usual Hilbert-Chow morphism for any integrable structure near $\xb$.
\item[(iii)] $\xn$ can be endowed with a stable almost-complex structure, hence defines
an almost-complex cobordism class. When $X$ is symplectic, $\xnj{}$ is symplectic.
\end{enumerate}
\end{theorem}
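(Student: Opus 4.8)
\par\medskip
The strategy is to construct, for each fixed choice of $(W,\jre)\in\mathcal{B}$, a compact differentiable $4n$-manifold $\xnj{}$ carrying the map $HC$ and a stable almost-complex structure, and then to exploit the connectedness and weak contractibility of $\mathcal{B}$ to show that the outcome is independent of that choice, up to diffeomorphisms homotopic to the identity. So fix $(W,\jre)$; for (i) one builds charts on $\xnj{}$ along the stratification of $\snx$ by partition type. Over the open stratum of reduced cycles, the map $(\xi ,\xb )\mapsto\xb$ identifies $\xnj{}$ with the configuration space of $n$ distinct points of $X$, which is an open subset of $\snx$ and a $4n$-manifold, the relative integrable structure making this identification smooth. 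Near a cycle $\xb_{0}=\sum_{i}\mu_{i}\he x_{i}$, choose pairwise disjoint coordinate balls $B_{i}\ni x_{i}$ in $X$, so that a \nbh\ of $\xb_{0}$ in $\snx$ is $\prod_{i}S^{\mu_{i}}B_{i}$; since $\jre$ has been taken invariant under a compatible system of retractions onto the strata, it is near $\xb_{0}$ a product of relative structures on the clusters, so the relative Hilbert scheme over $\prod_{i}S^{\mu_{i}}B_{i}$ factors as $\prod_{i}(B_{i})^{[\mu_{i}]}_{\jre}$, and the equation $HC=\xb$ cuts out, in the $i$-th factor, the preimage of $\Delta_{S^{\mu_{i}}B_{i}}\he$. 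By the relative version of Fogarty's theorem \cite{SchHilFo} for a smooth family of complex surfaces, each factor is a smooth manifold of the expected dimension, inheriting a differentiable structure from the ambient product chart; the crucial point is to check that charts attached to different points of different strata are smoothly compatible, which is exactly what the retraction-compatibility of $\jre$ is designed to give. Finally $\xnj{}$ is compact because $(\xi ,\xb )\mapsto\xb$ is proper onto the compact space $\snx$ with compact fibers.
\par\medskip
For (ii), the differentiable structure $\mathfrak{D}_{\!J}\he\suq\ci_{\snx}$ is defined to be the subsheaf of $\ci_{\snx}$ compatible with the local product decompositions and the retractions above, so that $\apl{HC}{\xnj{}}{\bigl( \snx,\mathfrak{D}_{\!J}\he\bigr)}$ is differentiable by construction. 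The fiber $HC^{-1}(\xb )$ is, by the very definition of $\xnj{}$, the fiber over $\xb$ of the ordinary Hilbert--Chow map of the integrable surface germ $(W_{\xb},J^{\re}_{\xb})$, namely $\prod_{i}\mathrm{Hilb}^{\mu_{i}}_{0}(\C^{2})$; since the germ of any complex surface is biholomorphic to a polydisc in $\C^{2}$, this fiber is homeomorphic to the corresponding fiber of the Hilbert--Chow morphism associated with \emph{any} integrable structure defined near $\xb$, which is precisely the assertion.
\par\medskip
For (iii), over the open stratum $\xnj{}$ is almost-complex, being locally a quotient of an open subset of $X^{n}$. To globalize a stable complex structure, recall that $\xnj{}$ is the preimage $(\pi ,\pr_{2}\circ HC_{\rel})^{-1}(\Delta_{\snx}\he)$ inside the relative Hilbert scheme $W_{\rel}^{[n]}$: there the relative tangent bundle of $\pi$ is complex, induced by $\jre$, while the normal direction pulls back $T\Delta_{\snx}\he\cong T\snx$, which is stably complex because $TX^{n}$ is; adding a trivial summand therefore equips $T\xnj{}$ with a stable almost-complex structure, hence $\xnj{}$ with an almost-complex cobordism class. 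When $\omega$ is a symplectic form on $X$ taming $J$, one upgrades this to a genuine symplectic form on $\xnj{}$ by gluing, with a partition of unity, a fiberwise relative K\"{a}hler form with the form descending from the product form on $X^{n}$, a relative version of the construction in the integrable case. Finally, given two choices of $(W,\jre)$, a path joining them in $\mathcal{B}$ yields a proper submersion onto $[0,1]$ whose fibers are the spaces $\xnj{t}$, whence by Ehresmann's theorem $\xnj{0}\cong\xnj{1}$ compatibly with the stable almost-complex structures; and since $\mathcal{B}$ is weakly contractible, any two such paths are homotopic relative to their endpoints, so the resulting diffeomorphisms are isotopic. Thus $\xnj{}$ is well-defined modulo diffeomorphisms homotopic to the identity, and the induced cobordism class is intrinsic.
\par\medskip
The main obstacle is (i): assembling the local complex-analytic models $\prod_{i}(B_{i})^{[\mu_{i}]}$ over the \emph{singular} base $\snx$ into a single global \emph{differentiable} manifold. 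This is exactly why one restricts attention to relative structures invariant under a compatible system of retractions on the strata, and why $\snx$ is given the weaker structure $\mathfrak{D}_{\!J}\he$ rather than the quotient differentiable structure: without such compatibility the charts coming from different strata need not glue smoothly. A secondary difficulty is the ``homotopic to the identity'' clause: obtaining a diffeomorphism via Ehresmann is routine, but one must run the argument over the weakly contractible parameter space $\mathcal{B}$ to ensure that all these identifications are coherent up to isotopy.
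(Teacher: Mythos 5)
This theorem carries the citations \cite{SchHilVo1}, \cite{SchHilVo2} and is quoted, not proved, in the paper: the only related argument the paper supplies is the weaker topological statement of Proposition \ref{PropUnSectionUnSubsectionUn} and the deformation results of Propositions \ref{PropDeuxSectionUnsubsectionDeux} and \ref{PropUnSectionUnsubsectionDeux}. So your proposal has to be judged as a reconstruction of Voisin's construction. For (i) and (ii) your outline has the right shape (localize over the stratification of $\snx$, split into clusters supported in disjoint balls, use the relative integrable structure to reduce to the integrable local model, invoke retraction-invariance to match charts across strata), but the step you yourself label as ``the crucial point'' is exactly the content of the theorem and is not supplied: relative Fogarty smoothness of the fibres $\bigl(W_{\xb}^{[n]},J_{\xb}^{\re}\bigr)$ does not make the incidence locus $(\pi,\pr_{2}\circ HC_{\rel})^{-1}(\Delta_{\snx})$ a differentiable manifold, because the base $\snx$ is singular and the equation $HC(\xi)=\xb$ couples the fibre variable to the base variable. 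Recording that retraction-compatibility ``is designed to give'' this is a statement of intent, not a proof; all of Voisin's work is in that step.

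There is also a genuine error in (iii). You claim that on $\xnj{}\subset W^{[n]}_{\rel}$ the tangent bundle splits as the relative tangent bundle of $\pi$ plus a normal part pulling back $T\snx$. This fails on two counts: $\snx$ is singular, so $T\snx$ is not a vector bundle to pull back; and the fibres of $\pi$ have real dimension $4n=\dim\xnj{}$, with $\pi\vert_{\xnj{}}=HC$ generically a bijection onto the open stratum, so the relative tangent directions of $\pi$ are generically \emph{transverse} to $T\xnj{}$ rather than contained in it. The stable almost-complex structure cannot be obtained by this Whitney-sum shortcut; in Voisin's papers it is a separate, delicate analysis near the exceptional locus, and the symplectic case is essentially the main theorem of \cite{SchHilVo2}. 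The final Ehresmann plus weak-contractibility argument for well-definedness up to diffeomorphisms homotopic to the identity is sound in outline and is the same mechanism the present paper uses at the topological level, but it presupposes that the family over a path in $\mathcal{B}$ is itself a manifold submerging onto the interval, i.e.\ it again rests on the unproven core of (i).
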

The first point is the analogue of Fogarty's result \cite{SchHilFo} in the
differentiable case. In this article we will not use differentiable properties of
$X^{[n]}\be$ but only topological ones, which allows us to work with $\xnj{}$ for any $J^{\, \rel}\be$ in $\mathcal{B}$. Without any assumption on $J^{\, \rel}\be$, the point (i) in Theorem \ref{MainVoisin} has the following topological version:
\begin{proposition}\label{PropUnSectionUnSubsectionUn}
If $J^{\re }\be\!\! \in\mathcal{B}$, $\xnj{}$ is a $4n$-dimensional
topological manifold.
\end{proposition}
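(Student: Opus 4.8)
Since the statement is local on $\xnj{}$, the plan is to fix a point $(\xi,\xb)\in\xnj{}$, write $\xb=\sum_{i=1}^{k}n_{i}x_{i}$ with the $x_{i}$ pairwise distinct, and argue by induction on $n$ (the case $n=1$ being trivial, as $X^{[1]}=X$). Choose pairwise disjoint coordinate balls $U_{i}\ni x_{i}$. For $\xb'$ close to $\xb$ the support of $\xb'$ lies in $\bigsqcup_{i}U_{i}$, so a neighbourhood of $\xb$ in $\snx$ is canonically a neighbourhood of $(n_{1}x_{1},\dots,n_{k}x_{k})$ in $\prod_{i}S^{n_{i}}U_{i}$; and since the relative Hilbert scheme of a disjoint union is the disjoint union of products of the relative Hilbert schemes of the pieces, the constraint $HC(\zeta)=\xb'$ forces $\zeta=\bigsqcup_{i}\zeta_{i}$ with $\zeta_{i}$ of length $n_{i}$ supported in $U_{i}$. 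When $k\ge 2$ this presents a neighbourhood of $(\xi,\xb)$ in $\xnj{}$ as a product of $k$ pieces, each of the same nature as the spaces $\xnj{}$ but with the smaller integers $n_{i}<n$ (the dependence of the relative structure on the remaining support points being immaterial for the topology), hence each a $4n_{i}$-manifold by the induction hypothesis; as $\sum_{i}4n_{i}=4n$ we are done in this case. So it remains to treat $k=1$, i.e. $\xb=nx_{0}$.

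In that case a neighbourhood of $(\xi,\xb)$ in $\xnj{}$ is the relative Hilbert--Chow fibre product $\mathcal{Z}$ attached to the family $\{J^{\re}_{\xb'}\}$ of integrable complex structures on a shrinking of $U_{0}$, with $\xb'$ running over a neighbourhood of $nx_{0}$ in $S^{n}U_{0}$; the fibre of $\mathcal{Z}\to S^{n}U_{0}$ over $\xb'$ is the Hilbert--Chow fibre $HC^{-1}(\xb')$ inside $(U_{0},J^{\re}_{\xb'})^{[n]}$. Over each stratum of $S^{n}U_{0}$ (indexed by partition type) the fibre is constant along the stratum, so there $\mathcal{Z}$ is a locally trivial topological fibre bundle, and by Theorem \ref{MainVoisin}(ii) its fibre is the corresponding product of classical punctual Hilbert schemes $\mathrm{Hilb}^{m}_{0}(\C^{2})$ --- exactly the fibre of the ordinary Hilbert--Chow map $(\C^{2})^{[n]}\to S^{n}(\C^{2})$ over a point of the same stratum. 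Writing $\mathcal{Z}'$ for the part of $(\C^{2})^{[n]}$ lying over a neighbourhood of $n\cdot 0$ in $S^{n}(\C^{2})$, and using the identification $S^{n}U_{0}\cong S^{n}(\C^{2})$ coming from the coordinates on $U_{0}$, we obtain two stratified spaces $\mathcal{Z}$ and $\mathcal{Z}'$ over the same neighbourhood of $n\cdot 0$ in $S^{n}(\C^{2})$, with the same strata; and $\mathcal{Z}'$ is a smooth complex manifold of complex dimension $2n$, hence a $4n$-manifold, by Fogarty's theorem \cite{SchHilFo}. It therefore suffices to upgrade this agreement over each stratum to a homeomorphism $\mathcal{Z}\simeq\mathcal{Z}'$ over $S^{n}U_{0}$.

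To produce that homeomorphism I would invoke Voisin's relative integrable structures that are invariant under a compatible system of retractions onto the strata of $\snx$: connectedness of $\mathcal{B}$ lets one deform $\{J^{\re}_{\xb'}\}$ to such a retraction-compatible family, for which Theorem \ref{MainVoisin}(i) supplies a differentiable --- in particular topological --- manifold structure, and invariance under the retractions is exactly what glues the trivialisations over the various strata together, by induction on the depth of the stratification, without disturbing the normal structure. The main obstacle is precisely this last control of the transverse directions: over a single stratum there is nothing to do, but $S^{n}(\C^{2})$ is genuinely singular along its diagonal strata, so one must verify that replacing a constant complex structure by the varying family $\{J^{\re}_{\xb'}\}$ does not alter the homeomorphism type of a transverse slice of $\mathcal{Z}$ at a point of a stratum of positive codimension --- equivalently, that $\mathcal{Z}$ and $\mathcal{Z}'$ have the same links along these strata. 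The remaining ingredients --- the separation of supports, the local triviality over an individual stratum, and the dimension bookkeeping --- are routine.
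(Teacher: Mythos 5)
There is a genuine gap, and you have in fact pointed at it yourself: the homeomorphism $\mathcal{Z}\simeq\mathcal{Z}'$ over a neighbourhood of $n\cdot 0$ in $S^{n}(\C^{2})$ is the entire content of the proposition, and your argument does not establish it. Knowing that the two spaces have the same stratified base and homeomorphic fibres over each stratum (which is all that Theorem \ref{MainVoisin}(ii) gives you) says nothing about how the pieces fit together transversally to the positive-codimension strata; a stratified space with the correct strata and correct fibres need not be a manifold. Your plan for closing this --- deform $\{J^{\re}_{\xb'}\}$ inside $\mathcal{B}$ to a retraction-compatible family and argue that the homeomorphism type does not change --- is also circular relative to the logical structure of the paper: the invariance of $\xnj{}$ under deformation of $J^{\re}$ is Proposition \ref{PropDeuxSectionUnsubsectionDeux}(i), whose proof (via Proposition \ref{PropUnSectionUnsubsectionDeux}) explicitly relies on the local coordinates constructed in the proof of the present proposition. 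So neither the link control nor the deformation argument is available at this point without further work.

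The paper's proof is much more direct and avoids the stratum-by-stratum analysis entirely. One takes relative holomorphic coordinates $\phi(\xb,\cdot)$ for $J^{\re}_{\xb}$ near $\xb_{0}$, depending smoothly on $\xb$, normalises them to $\ti{\phi}(x,p)=\phi(x,p)-D_{1}\phi(x_{0},y_{j})(x-x_{0})$ so that the induced equivariant map $\Gamma(x_{1},\dots,x_{n})=(\ti{\phi}(x,x_{1}),\dots,\ti{\phi}(x,x_{n}))$ has differential the identity at $x_{0}$, and hence descends to a local homeomorphism $\gamma$ of $\snx$ into $S^{n}\C^{2}$. The chart is then simply $\xi\mapsto\phi(\xb,\cdot)_{*}\xi$ with $HC(\xi)=\xb$: for each fixed $\xb$ this is a biholomorphism onto a genuine subscheme of the standard $\C^{2}$, and the normalisation of $\gamma$ guarantees that the resulting map onto an open subset of the classical $(\C^{2})^{[n]}$ is a bijection with continuous inverse $\eta\mapsto(\phi(\ub{y},\cdot)^{-1})_{*}\eta$, $\ub{y}=\gamma^{-1}(HC(\eta))$. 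This single formula handles the transverse directions automatically, which is exactly the point your sketch leaves open. (Your preliminary reduction by splitting supports is harmless but unnecessary in this approach.)
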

\begin{proof}
 Let $\xb_{\, 0}\in\snx$. There exist holomorphic relative coordinates
$(z_{\xb}\he,
w_{\xb}\he)$ for $J^{\re }_{\xb}$ in a neighbourhood of $\xb_{\, 0}$ which depend
smoothly on $\xb$. For every $\xb$ near $\xb_{\, 0}$, the map
$\flgd{p}{(z_{\xb}(p),w_{\xb}(p))}$ is a biholomorphism between
$(W_{\xb}\he,J^{\re }_{\xb})$ and its image in
$\C^{2}\be$ with the standard complex structure. Let us write
$(z_{\xb}(p),w_{\xb}(p))=
\phi (x,p)$, where $\phi $ is a smooth function defined for $x$ near a lift $x_{0}\he $ of
$\xb_{\, 0}$, invariant under the action of the stabilizer of
$x_{0}\he $ in
$\mathfrak{S}_{n}$.
\par
We write
 $x_{0}\he =(y_{1},\dots,y_{1},\dots,y_{k},\dots,y_{k})$ where the points $y_{j}$
are pairwise distinct and each $y_{j}$ appears $n_{j}$ times.
We will identify small distinct \nbh s of $y_{j}$ in $X$ with distinct balls
$B(y_{j},\varepsilon )$ in $\C^{2}\be$. $\phi $ is defined on $B(y_{1},\varepsilon
)^{n_{1}}\be\times\cdots\times B(y_{k},\varepsilon
)^{n_{k}}\be\times\cup_{j=1}^{k}
B(y_{j},\varepsilon )$ and is $\mathfrak{S}_{n_{1}}\times\cdots\times
\mathfrak{S}_{n_{k}}$ invariant.
We can also suppose that $\phi (x_{0}\he ,\, .\, )=\textrm{id} $. We introduce new holomorphic coordinates by the formula
$\ti{\phi }(x,p)=\phi (x,p)-D_{1}\, \phi (x_{0}\he ,y_{j})(x-x_{0}\he)$ if $p\in B(y_{j},\varepsilon )$. Let
\[
\apl{\Gamma }{B(y_{1},\varepsilon
)^{n_{1}}\be\times\cdots\times B(y_{k},\varepsilon
)^{n_{k}}\be}{\bigl( \C^{2}\bigr)^{n}}
\] be defined by
\[
\Gamma (x_{1},\dots,x_{n})=(\ti{\phi }(x_{1},\dots,x_{n},x_{1}),\dots,\ti{\phi }(x_{1},\dots,x_{n},x_{n})).
\]
The map
$\Gamma $ is $\mathfrak{S}_{n_{1}}\times\cdots\times
\mathfrak{S}_{n_{k}}$ equivariant and has for differential at $x_{0}\he $ the identity map, so it induces a local homeomorphism $\gamma $ of $\snx$ around $\xb_{\, 0}$. The image of the chart of $(\xn,J^{\re }\be)$ over a neighbourhood of $\xb_{\, 0}$ will be the classical Hilbert scheme
$(\C^{2})^{[n]}\be$ over a neighbourhood of $\xb_{\, 0}$. The chart and its inverse are given by the formulae $\varphi (\xi )=\phi (\xb,\, .\, )_{*}\he$, where $ HC(\xi )=\xb$, and
$\varphi ^{-1}(\eta )=(\phi (\ub{y},\, .\, )\be^{-1})_{*}\he\eta$, where $\ub{y}=\gamma ^{-1}
(HC(\eta ))$.
\end{proof}
\begin{remark}
 Let $J_{0}^{\re }$ and $J_{1}^{\re }$ be two relative integrable complex structures, and let
$\phi _{0}$, $\phi _{1}$, $\gamma _{0}$ and $\gamma _{1}$ be defined as above. Then
$\xnj{0}$ and $\xnj{1}$ are homeomorphic over a \nbh\ of $\xb_{\, 0}$. If $\phi (\xb,p)=\phi _{1}^{-1}(\gamma _{1}^{-1}\gamma _{0}(\xb),\phi_{0}\he (\xb,p))$ and $\gamma (\xb)=\gamma _{1}^{-1}\gamma _{0}(\xb)$, then there is a commutative diagram
\[
\xymatrix{
\xnj{1}\ar[d]^{HC}&HC^{-1}(V_{\xb_{\, 0}})\ar@{_{(}->}[l]\ar[r]^{\phi _{*}\he}_{\sim}\ar[d]&\,
HC^{-1}(\ti{V}_{\xb_{\, 0}})\, \ar[d]\ar@{^{(}->}[r]&\xnj{1}\ar[d]^{HC}\\
\snx\ar@{}@<-1.5ex>[r]^{\supseteq}&V_{\xb_{\, 0}}\ar[r]^{\gamma }_{\sim}&\ti{V}_{\xb_{\, 0}}\he
\ar@{}@<-1.5ex>[r]^{\subseteq}&\snx
}
\]
and $\gamma $ is a stratified isomorphism.
\end{remark}
\subsection{\got's formula}
We will now turn our attention to the cohomology of $\xnj{}$. The first step is the computation of the Betti numbers of $\xn$. We first recall the proof of G\"{o}ttsche and Soergel (\cite{SchHilGoSo}) and then we show how to adapt it in the non-integrable case.
\par\medskip
Let $X$ and $Y$ be irreducible algebraic complex varieties and $\apl{f}{Y}{X}$ be a proper morphism. We assume that $X$ is stratified in such a way that $f$ is a topological fibration over each stratum $X_{\nu }\he$. We denote by $d_{\nu }\he$ the real dimension of the largest irreducible component of the fiber. If $Y_{\nu }\he=f^{-1}(X_{\nu }\he)$, $\LL_{\nu }\he=R^{\, d\nu }\be f_{*}\he\Q_{Y_{\nu }\he}$ will be the associated monodromy local system on $X_{\nu }\he$.
\begin{definition}\label{DefUnSectionUnSubsectionDeux}
 -- The map $f$ is a \emph{semi-small morphism} if for all $\nu $, $\codim_{X}\he X_{\nu }\ge d_{\nu }\he$.
\par
-- A stratum $X_{\nu }\he$ is \emph{essential} if $\codim_{X}\he X_{\nu }\he= d_{\nu }\he$.
\end{definition}
\begin{theorem}\label{ThUnSectionUnSubsectionDeux}\cite{SchHilDBBG}
 If $Y$ is rationally smooth and $\apl{f}{Y}{X}$ is a proper semi-small morphism, there exists a canonical quasi-isomorphism
$\xymatrix{
Rf_{*}\he\Q_{Y}\ar[r]^(.28){\sim}&\mkern -13mu\bigoplus\limits_{\nu \ \emph{essential}}j_{\nu *}\he\, IC_{\ba{X}_{\nu }\he}^{\, ^{\bullet}}(\LL_{\nu }\he)[-d_{\nu }\he]
}$
in the bounded derived category of $\Q$-constructible sheaves on $X$,
where $IC_{\ba{X}_{\nu }\he}^{\, ^{\bullet}}(\LL_{\nu }\he)$ is the intersection complex on $\ba{X}_{\nu }\he$ associated to the monodromy local system $\LL_{\nu }\he$ and $\apl{j_{\nu }\he}{\ba{X}_{\nu }\he}{X}$ is the inclusion. In particular,
$H^{k}(Y,\Q)=\bigoplus\limits_{\nu \ \emph{essential}}IH^{k-d_{\nu }\he}\be(\ba{X}_{\nu }\he,\LL_{\nu }\he)$.
\end{theorem}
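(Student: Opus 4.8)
The plan is to prove that, with $d:=\dim_{\mathbb{C}}X=\dim_{\mathbb{C}}Y$ and $P:=Rf_{*}\mathbb{Q}_{Y}[d]$, the object $P$ is a semisimple perverse sheaf on $X$, to identify its simple summands with the terms $IC^{\bullet}_{\overline{X}_{\nu}}(\LL_{\nu})[-d_{\nu}]$ of the statement, and finally to obtain the formula for $H^{\bullet}(Y,\mathbb{Q})$ by taking hypercohomology. We may assume $f$ surjective, replacing $X$ by $f(Y)$; then, applying semi-smallness to the open stratum, the generic fibre is finite and $\dim_{\mathbb{C}}Y=d$. First I would check that $P$ is a self-dual perverse sheaf. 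The support condition is exactly semi-smallness: a point of $X_{\nu}$ lies in $\supp\mathcal{H}^{k}(P)=\supp R^{\,k+d}f_{*}\mathbb{Q}_{Y}$ only if $H^{k+d}(f^{-1}(x))\neq 0$, hence only if $k+d\le d_{\nu}$, and then $\dim_{\mathbb{C}}X_{\nu}=d-\codim_{\mathbb{C}}X_{\nu}\le d-d_{\nu}\le -k$, which is what the perverse $t$-structure requires. Since $Y$ is rationally smooth of real dimension $2d$ one has $\mathbb{D}(\mathbb{Q}_{Y}[d])\simeq\mathbb{Q}_{Y}[d]$, and properness of $f$ gives $\mathbb{D}P\simeq Rf_{*}\mathbb{D}(\mathbb{Q}_{Y}[d])\simeq P$; thus $P$ is self-dual and the cosupport condition follows from the support condition by Verdier duality.

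Granting for the moment that $P$ is semisimple, the identification of its summands is routine. Its simple constituents are the $IC^{\bullet}_{\overline{X}_{\nu}}(L)[\dim_{\mathbb{C}}\overline{X}_{\nu}]$ for strata $X_{\nu}$ and irreducible local systems $L$ on them, and if such a summand occurs then $R^{\codim_{\mathbb{C}}X_{\nu}}f_{*}\mathbb{Q}_{Y}$ is non-zero along $X_{\nu}$, which forces $\codim_{\mathbb{C}}X_{\nu}\le d_{\nu}$, hence, by semi-smallness, $X_{\nu}$ essential and $\dim_{\mathbb{C}}\overline{X}_{\nu}=d-d_{\nu}$; conversely every essential stratum occurs, since the top cohomology $\LL_{\nu}=\mathcal{H}^{d_{\nu}}(Rf_{*}\mathbb{Q}_{Y})|_{X_{\nu}}$ of its fibres (a semisimple local system, its monodromy permuting the finitely many top-dimensional components of the fibre) must be accounted for. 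Comparing multiplicities along each $X_{\nu}$, and using the strict support condition for intersection complexes to see that $j_{\mu*}IC^{\bullet}_{\overline{X}_{\mu}}(\LL_{\mu})[-d_{\mu}]$ contributes nothing to $\mathcal{H}^{d_{\nu}}(Rf_{*}\mathbb{Q}_{Y})$ along $X_{\nu}$ when $\overline{X}_{\mu}\supsetneq X_{\nu}$, one gets $Rf_{*}\mathbb{Q}_{Y}\simeq\bigoplus_{X_{\nu}\text{ essential}}j_{\nu*}IC^{\bullet}_{\overline{X}_{\nu}}(\LL_{\nu})[-d_{\nu}]$, since $IC^{\bullet}(-)$ is additive in the local system. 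So the theorem reduces to the semisimplicity of $P$.

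The main obstacle is precisely this semisimplicity, which is not formal. In the algebraic situation of the statement I would argue via purity: $Y$ being rationally smooth, $\mathbb{Q}_{Y}[d]$ is the intersection complex of $Y$, hence a pure perverse sheaf; as $f$ is proper, $P=Rf_{*}\mathbb{Q}_{Y}[d]$ is pure as well, and a pure perverse sheaf is semisimple by the decomposition theorem of \cite{SchHilDBBG}. For semi-small $f$ this can be made concrete: the semisimplicity of the self-dual perverse sheaf $P$ is equivalent to the non-degeneracy, for $x$ in each essential stratum $X_{\nu}$, of the refined intersection form on the fibre $f^{-1}(x)$ — the bilinear form on the span of the fundamental classes of the top-dimensional irreducible components of $f^{-1}(x)$ given by their intersection numbers in a transverse slice $f^{-1}(N_{x})$ — and this form is definite, hence non-degenerate, by the Hodge--Riemann bilinear relations on (a smooth projective model of) $Y$. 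In the almost-complex framework of this paper no such Hodge structure is available, and one relies instead on the topological version of the decomposition theorem due to Le Potier \cite{SchHilLP}, where the non-degeneracy of the fibre intersection forms is obtained from Poincaré--Lefschetz duality on the rationally smooth space $Y$. Granting the decomposition, the final assertion is immediate:
\[
H^{k}(Y,\mathbb{Q})=\mathbb{H}^{k}\bigl(X,Rf_{*}\mathbb{Q}_{Y}\bigr)=\bigoplus_{\nu}\mathbb{H}^{k}\bigl(X,j_{\nu*}IC^{\bullet}_{\overline{X}_{\nu}}(\LL_{\nu})[-d_{\nu}]\bigr)=\bigoplus_{\nu}IH^{\,k-d_{\nu}}\bigl(\overline{X}_{\nu},\LL_{\nu}\bigr),
\]
the sums running over the essential strata.
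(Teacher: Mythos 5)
The paper does not actually prove this statement: it is quoted from \cite{SchHilDBBG}, and Remark \ref{RemarkAbove} merely points to Le Potier's notes \cite{SchHilLP} (and \cite{SchHilGri}) for a ``simple proof'' whose interest is that it survives under purely topological hypotheses. So your sketch cannot be compared with an in-paper argument; judged on its own it is a correct outline of the standard proof. The bookkeeping is right: with $d_{\nu}$ the \emph{real} fibre dimension and $\codim$ the complex codimension (the convention forced by the shifts in the statement), semi-smallness gives the perverse support condition for $Rf_{*}\Q_{Y}[d]$, rational smoothness plus properness give self-duality and hence cosupport, and your identification of the simple summands (only essential strata can carry one; the strict support condition kills cross-contributions in top degree, so the multiplicity space over $X_{\nu}$ is exactly $\LL_{\nu}$) is correct. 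The whole theorem is thus reduced to semisimplicity, and your primary argument for it --- $\Q_{Y}[d]\simeq IC_{Y}^{\bullet}$ is pure, $Rf_{*}$ preserves purity for proper $f$, pure implies semisimple --- is precisely the route of \cite{SchHilDBBG}. One caveat: your parenthetical claim that in the topological setting the non-degeneracy of the refined intersection forms ``is obtained from Poincar\'e--Lefschetz duality on $Y$'' overstates what duality gives. Duality on the rationally smooth $Y$ produces the form and the self-duality of $Rf_{*}\Q_{Y}[d]$, but a self-dual perverse sheaf need not be semisimple; the non-degeneracy is exactly the non-formal input, supplied in the algebraic case by purity or by the Hodge--Riemann relations (de Cataldo--Migliorini), and in this paper's application by the fact that the Hilbert--Chow map is \emph{locally} equivalent to the algebraic semi-small map $(\C^{2})^{[n]}\to S^{n}\C^{2}$ with irreducible fibres, so the algebraic statement can be invoked chart by chart. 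For the theorem as stated (algebraic varieties) your proof is fine; just do not let the duality remark stand in for the actual content of the topological extension.
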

\begin{remark}\label{RemarkAbove}
A simple proof of Theorem \ref{ThUnSectionUnSubsectionDeux} is done in \cite{SchHilLP} and can be found in \cite{SchHilGri}. Furthermore, this proof shows that
$Rf_{*}\he\Q_{Y}\simeq \bigoplus\limits_{\nu \ \textrm{essential}}j_{\nu *}\he\, IC_{\ba{X}_{\nu }\he}^{\, ^{\bullet}}(\LL_{\nu} \he)[-d_{\nu }\he]$ under the following weaker topological hypotheses: $Y$
is a rationally smooth topological space,
$X$ is a stratified topological space and
$\apl{f}{Y}{X}$ is a proper map which is locally equivalent over $X$ to a semi-small map between complex analytic varieties.
\end{remark}
If $X$ is a quasi-projective surface, the Hilbert-Chow morphism is semi-small with irreducible fibers (see \cite{SchHilBr}), so that the monodromy local systems are trivial, and $\xn$ is smooth. The decomposition theorem gives G\"{o}ttsche's formula for $b_{i}(\xn)$. We now show that \got's formula holds more generally for almost-complex Hilbert schemes.
\begin{theorem}[G\"{o}ttsche's formula]\label{ThDeuxSectionUnSubsectionDeux}
 If $(X,J)$ is an almost-complex compact fourfold, then for any integrable complex structure
$\jre$,
\[
\sum_{n=0}^{\infty }\, \sum_{i=0}^{4n}b_{i}\he \bigl(\xnj{}\bigr)t^{i}\be q^{n}=\prod_{m=1}^{\infty }\dfrac
{\bigl[(1+t^{2m-1}q^{m})(1+t^{2m+1}q^{m})\bigr]^{b_{1}(X)}}
{(1-t^{2m-2}q^{m})(1-t^{2m+2}q^{m})(1-t^{2m}q^{m})^{b_{2}(X)}}\cdot
\]
\end{theorem}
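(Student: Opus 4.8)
The plan is to reduce the almost-complex case to G\"{o}ttsche and Soergel's computation in the integrable case by exploiting the topological decomposition theorem in the form recorded in Remark \ref{RemarkAbove}. First I would observe that the Hilbert-Chow map $\apl{HC}{\xnj{}}{\snx}$ is, by Theorem \ref{MainVoisin}(ii) together with the chart construction in Proposition \ref{PropUnSectionUnSubsectionUn}, locally equivalent over $\snx$ to the usual Hilbert-Chow morphism $(\C^{2})^{[n]}\to S^{n}\C^{2}$ attached to any nearby integrable structure; in particular it is proper and locally equivalent over $\snx$ to a semi-small map of complex analytic varieties. Since $\xnj{}$ is a topological manifold (Proposition \ref{PropUnSectionUnSubsectionUn}), hence rationally smooth, the hypotheses of Remark \ref{RemarkAbove} are met and we obtain a decomposition
\[
RHC_{*}\he\Q_{\xnj{}}\simeq \bigoplus_{\nu \ \textrm{essential}}j_{\nu *}\he\, IC_{\ba{S_{\nu }}}^{\, ^{\bullet}}(\LL_{\nu }\he)[-d_{\nu }\he],
\]
where the strata $S_{\nu }$ of $\snx$ are the standard ones indexed by partition types, the fiber dimensions $d_{\nu }\he$ are the same as in the integrable case, and the monodromy local systems $\LL_{\nu }\he$ on $S_{\nu }$ are the same as well --- this is because the local model of $HC$ over each stratum, and hence both $d_{\nu }\he$ and the monodromy representation of $\pi_{1}(S_{\nu })$ on the top cohomology of the fiber, is dictated purely by the local analytic picture, which is identical to the integrable one.

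Next I would identify the essential strata and their local systems exactly as in \cite{SchHilGoSo}: for $\snx$ the essential strata are indexed by partitions $\alpha=(1^{a_1}2^{a_2}\cdots)$ of $n$, the corresponding stratum $S_\alpha$ is a bundle over the configuration space of distinct points, $\codim_{\snx}S_\alpha=d_\alpha=2(n-|\alpha|)$ where $|\alpha|=\sum a_i$ is the number of parts, and the monodromy local system is the one pulled back from the symmetric group actions permuting equal-multiplicity points. Then the closed strata $\ba{S_\alpha}$ are themselves products of symmetric products of $X$ (up to finite quotients), and their intersection cohomology with these local system coefficients can be computed from the rational cohomology $H^*(X,\Q)$ by the same bookkeeping as in the integrable case. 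Summing the contributions $\bigoplus_\nu IH^{k-d_\nu}(\ba{S_\nu},\LL_\nu)$ over $n$ with the weights $t^i q^n$ reproduces exactly the product formula of G\"{o}ttsche and Soergel, which gives the stated generating function; the only inputs used are $b_1(X)$ and $b_2(X)$.

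The main obstacle, and the point that needs the most care, is the bridge between "differentiable statement in Theorem \ref{MainVoisin}" and "the hypothesis of Remark \ref{RemarkAbove}," namely checking that $HC:\xnj{}\to\snx$ is genuinely locally equivalent \emph{over $\snx$} to a semi-small analytic map in the precise sense required --- i.e. that over a neighbourhood of each point $\xb_0$ there is a homeomorphism of $HC^{-1}(V_{\xb_0})$ with an open set of $(\C^2)^{[n]}$ commuting (up to the stratified isomorphism $\gamma$ of the base) with Hilbert-Chow. This is essentially the content of the chart construction in Proposition \ref{PropUnSectionUnSubsectionUn} and the commutative diagram in the Remark following it, but one must be careful that the base homeomorphism $\gamma$ respects the stratification of $\snx$ (so that "essential stratum" and "monodromy local system" are well-defined invariants independent of the chart) and that the fibers are carried homeomorphically, which is exactly Theorem \ref{MainVoisin}(ii). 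Once this local equivalence is in place, everything else is a transcription of \cite{SchHilGoSo}: identifying the essential strata, computing $IH^*(\ba{S_\nu},\LL_\nu)$ in terms of $H^*(X,\Q)$, and summing the series. A secondary check is independence of the answer from the choice of relative integrable structure $J^{\re}\in\mathcal{B}$, which follows from the homeomorphism over neighbourhoods recorded in the Remark after Proposition \ref{PropUnSectionUnSubsectionUn} (or simply from the fact that the right-hand side of the formula does not depend on $J^{\re}$).
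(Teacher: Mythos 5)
Your proposal follows exactly the paper's own route: invoke the topological decomposition theorem of Remark \ref{RemarkAbove}, verify its hypotheses via the local charts of Proposition \ref{PropUnSectionUnSubsectionUn} (which exhibit $HC$ as locally equivalent over $\snx$ to the integrable Hilbert--Chow morphism on an open set of $\C^{2}$), and then transcribe the G\"{o}ttsche--Soergel computation of the essential strata and their intersection cohomology. The extra detail you supply on strata, fiber dimensions and monodromy is a correct elaboration of what the paper leaves implicit.
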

\begin{proof}
 By Remark \ref{RemarkAbove}, it suffices to check that
$\apl{HC}{\xnj{}}{\snx}$ is locally equivalent to a semi-small morphism. The proof of
Proposition \ref{PropUnSectionUnSubsectionUn}
shows that $\apl{HC}{\xnj{}}{\snx}$ is locally equivalent to $\apl{HC}{U^{[n]}\be}{S^{[n]}\be U}$ where $U$ is an open set in $\C^{2}\be$. Thus the decomposition theorem applies and the computations are the same as in the integrable case.
\end{proof}
\subsection{Variation of the relative integrable structure}
Theorem \ref{ThDeuxSectionUnSubsectionDeux} implies in particular that the Betti numbers of $\xnj{}$ are independent of $\jre$. We now prove a stronger result, namely that the Hilbert schemes corresponding to different relative integrable complex structures are homeomorphic.
\begin{proposition}\label{PropDeuxSectionUnsubsectionDeux}
$\he$\par
\begin{enumerate}\label{NouvPropUn}
 \item [(i)] Let $\bigl( J_{t}^{\re }\bigr)_{t\in B(0,r)\suq\R^{d}\be}\he$ be a smooth path in $\mathcal{B}$. Then the associated relative Hilbert scheme $\bigl( \xn,\bigl\{J_{t}^{\re }\bigr\}_{t\in B(0,r)}\he\bigr)$ over
$B(0,r)$ is a topological fibration.
\item[(ii)] If $J_{0}^{\re }$, $J_{1}^{\re }\in\mathcal{B}$, then
there exist canonical isomorphisms
$\hb \bigl(\xnj{0},\Q\bigr)\simeq\hb \bigl(\xnj{1},\Q\bigr)$ and $K\bigl(\xnj{0}\bigr)\simeq K\bigl(\xnj{1}\bigr)$.
\end{enumerate}
\end{proposition}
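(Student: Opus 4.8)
The plan is to reduce both assertions to a single geometric fact: the total space $\mathcal{X}$ of the family is a topological manifold of dimension $4n+d$, and the projection $\pi\colon\mathcal{X}\to B(0,r)$ is a proper topological submersion. Here $\mathcal{X}$ is the relative Hilbert scheme $\bigl(\xn,\{J^{\re}_t\}_{t\in B(0,r)}\bigr)$, constructed as in Definition~\ref{DefUnSectionUnSubsectionUn} but over $\snx\times B(0,r)$, with the fibrewise integrable structure equal to $(J^{\re}_t)_{\ub{x}}$ on the fibre over $(\ub{x},t)$. Granting the fact, part (i) is immediate: a proper topological submersion over a manifold is a locally trivial fibre bundle (the topological analogue of Ehresmann's theorem, which rests on the Edwards--Kirby isotopy extension theorem for topological manifolds), hence in particular a topological fibration.

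I would establish the fact as follows. Properness of $\pi$: it factors through $(HC,\pi)\colon\mathcal{X}\to\snx\times B(0,r)$, which is proper because, locally over $\snx\times B(0,r)$, it is the Hilbert--Chow morphism of a complex-analytic family and hence has the compact Hilbert--Chow fibres as its fibres; and $\snx$ is compact. The submersion property is obtained by rerunning the proof of Proposition~\ref{PropUnSectionUnSubsectionUn} with $t$ as an extra parameter: since $(J^{\re}_t)_t$ is a smooth path, the relative holomorphic coordinates, the smooth map $\phi_t(x,p)$, the equivariant map $\Gamma_t$ and the induced local homeomorphism $\gamma_t$ of $\snx$ can all be chosen to depend smoothly on $t$, so the charts of that proposition (which identify $HC^{-1}(V_{\ub{x}_{0}})\cap\xnj{t}$ with an open subset of $(\C^{2})^{[n]}$) glue to charts $(\xi,t)\mapsto(\varphi_t(\xi),t)$ of $\mathcal{X}$ onto open subsets of $\R^{4n}\times\R^{d}$ in which $\pi$ is the projection onto the last $d$ coordinates. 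Two such charts differ by a homeomorphism preserving the last $d$ coordinates, so $\mathcal{X}$ is a topological manifold and $\pi$ is a topological submersion.

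Part (ii) follows from (i). Fix a common $W$ for $J^{\re}_0$ and $J^{\re}_1$ (shrinking $W$ is harmless, as $\xnj{}$ depends only on the germ of $(W,J^{\re})$ along $Z_{n}$). Since $\mathcal{B}$ is path-connected, $J^{\re}_0$ and $J^{\re}_1$ are joined by a path in $\mathcal{B}$, which may be taken smooth; by (i) the resulting family is a trivial fibre bundle over $[0,1]$, and restricting a trivialisation to the endpoints yields a homeomorphism $\xnj{0}\simeq\xnj{1}$, whence isomorphisms $H^{*}(\xnj{0},\Q)\simeq H^{*}(\xnj{1},\Q)$ and $K(\xnj{0})\simeq K(\xnj{1})$ by homotopy invariance. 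Canonicity uses that $\mathcal{B}$ is weakly contractible: any two paths from $J^{\re}_0$ to $J^{\re}_1$ are homotopic rel endpoints, and a homotopy gives, via (i), a fibre bundle over $[0,1]^{2}$ whose two vertical edges carry the constant bundles with fibres $\xnj{0}$ and $\xnj{1}$; the two candidate isomorphisms are the monodromies of this bundle along the two horizontal edges, and they coincide because the vertical edges are constant.

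The main obstacle is the globalisation step hidden in (i). Proposition~\ref{PropUnSectionUnSubsectionUn} and the remark following it are intrinsically local over $\snx$ --- each local homeomorphism there covers the stratified homeomorphism $\gamma_t$, not the identity, of $\snx$ --- so passing from the continuously varying local charts to a bona fide trivialisation of $\pi$ near a point $t_0\in B(0,r)$ is the one step where a soft argument does not suffice. One can dispose of it by quoting the topological Ehresmann theorem, as above; alternatively, one can glue the local homeomorphisms $\Phi^{t_0,t}_{\alpha}$ of that remark by a partition of unity, which is legitimate because for $t$ near $t_0$ they are $C^{0}$-close to the identity, together with local contractibility of homeomorphism groups; a third option is to glue stratum by stratum along the Hilbert--Chow stratification of $\snx$, over whose strata genuine fibre bundle structures are available. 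The remaining ingredients --- the parametrised rerun of Proposition~\ref{PropUnSectionUnSubsectionUn}, homotopy invariance of $H^{*}$ and $K$ on compact manifolds, and the monodromy bookkeeping --- are routine.
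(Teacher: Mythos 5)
Your argument is essentially correct but takes a genuinely different route from the paper's. The paper never works with the total space of the family: its Proposition \ref{PropUnSectionUnsubsectionDeux} first constructs a global family of fibrewise biholomorphisms $\psi_{t,\xb}\he$ from $\bigl(W_{\xb}\he,J^{\re}_{0,\xb}\bigr)$ onto $\bigl(W_{\sn\psi_{t,\xb}\he(\xb)}\he,J^{\re}_{t,\sn\psi_{t,\xb}\he(\xb)}\bigr)$, produced by solving the problem locally over $\snx$ in relative holomorphic coordinates and gluing the local solutions with cut-off functions --- legitimate because what is being glued are biholomorphisms of domains in $\C^{2}\be$ close to the identity, which form a contractible set. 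The trivialization of the family of Hilbert schemes is then the explicit map $\Gamma(\xi,\xb,t)=\bigl(\psi_{t,\xb*}\he\xi,\sn\psi_{t,\xb}\he(\xb),t\bigr)$. So the globalisation step you rightly single out as the crux is carried out downstairs on $X$, where it is soft, not upstairs on $X^{[n]}\be$, where it is not. Your main route --- total space a manifold, $\pi$ a proper topological submersion, conclude by the topological Ehresmann theorem of Siebenmann resting on Edwards--Kirby --- is valid, but it trades an elementary construction for a deep theorem of geometric topology; and of your two fallbacks, the partition-of-unity one does not literally make sense for homeomorphisms of $(\C^{2}\be)^{[n]}\be$ (there is nothing to average) and secretly re-imports the Edwards--Kirby machinery, while the stratum-by-stratum gluing is exactly where the difficulty lives and is left undeveloped. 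A further advantage of the paper's construction, invisible from the bare statement: since $\Gamma$ pushes subschemes forward along the $\psi_{t,\xb}\he$, it commutes with the Hilbert--Chow maps and preserves incidence loci, which is what allows the same argument to be reused for product Hilbert schemes and for the incidence varieties of Section \ref{SecDeux}; a trivialization extracted from the submersion theorem carries no such compatibility. Your treatment of (ii), including the monodromy argument over $[0,1]^{2}$ via $\pi_{1}(\mathcal{B})=0$, agrees with the paper's.
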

In order to prove Proposition \ref{NouvPropUn}, we first establish the following result:
\begin{proposition}\label{PropUnSectionUnsubsectionDeux}
Let $\bigl( J_{t}^{\re }\bigr)_{t\in B(0,r)\suq\R^{d}}\he$ be a family of smooth relative complex structures in a \nbh\ of $Z_{n}\he$ varying smoothly with $t$. Then there exist
$\varepsilon >0$, a \nbh\ $W$ of $Z_{n}\he$ in $\snx\tim X$ and a continuous map
$\xymatrix@C=17pt{
\psi :(t,\xb,p)\ar@{|->}[r]&\psi _{t,\xb}\he(p)
}$
from $B(0,\varepsilon )\tim W$ to $X$
such that:
\begin{enumerate}
 \item [(i)] $\psi _{0,\xb}\he(p)=p$,

 \item [(ii)] For fixed $(t,\xb)$, $\psi _{t,\xb}\he$ is a biholomorphism between a \nbh\ of $\xb$ and a \nbh\ of $\sn\psi _{t,\xb}\he(\xb)$, endowed with the structures $J^{\re }_{0,\xb}$ and
$J^{\re }_{t,\psi _{t,\xb}\he(\xb)}$,
 \item [(iii)] For all $t$ in $B(0,\varepsilon )$, the map $\flgd{\xb}{\sn\psi _{t,\xb}\he(\xb)}$ is a homeomorphism of $\snx$.
\end{enumerate}
\end{proposition}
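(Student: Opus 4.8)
\medskip

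\noindent\textit{Proof plan.} The plan is to construct $\psi$ first over a small neighbourhood of an arbitrary point $\xb_{\, 0}\in\snx$, and then to glue these local pieces together over the compact space $\snx$. The local part is an implicit-function argument resting on the parametrised holomorphic coordinates already employed in the proof of Proposition \ref{PropUnSectionUnSubsectionUn}; the globalisation is where the real difficulty lies.

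\par\medskip\noindent\textbf{The local construction.} Fix $\xb_{\, 0}$ with lift $x_{0}=(y_{1},\dots,y_{1},\dots,y_{k},\dots,y_{k})$, each $y_{j}$ occurring $n_{j}$ times, and let $H=\mathfrak{S}_{n_{1}}\times\cdots\times\mathfrak{S}_{n_{k}}$ be its stabiliser. By the Newlander--Nirenberg theorem with smooth dependence on parameters, for each $j$ one obtains a map $\phi_{t}(x,p)$, smooth in $(t,x,p)$ for $t$ near $0$, $x$ near $x_{0}$ and $p$ near $y_{j}$, which is $H$-invariant in $x$ and such that for fixed $(t,x)$ the map $p\mapsto\phi_{t}(x,p)$ is a biholomorphism onto an open subset of $\C^{2}$ for the structure $J^{\re}_{t,\xb}$, $\xb=[x]$. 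Exactly as in the proof of Proposition \ref{PropUnSectionUnSubsectionUn} we may arrange that $\phi_{0}(x_{0},\,\cdot\,)=\id$ and, replacing $\phi_{t}(x,p)$ by $\phi_{t}(x,p)-D_{1}\phi_{t}(x_{0},y_{j})(x-x_{0})$ on the ball around each $y_{j}$ (a translation in the target, hence harmless for holomorphicity), that $D_{1}\phi_{t}(x_{0},y_{j})=0$ for all $t$ and $j$. Now set
\[
F(t,x,\ti{x})=\bigl(\phi_{t}(\ti{x},\ti{x}_{i})-\phi_{0}(x,x_{i})\bigr)_{1\le i\le n};
\]
this is smooth, it is $H$-equivariant in the sense that $F(t,\sigma x,\sigma\ti{x})_{i}=F(t,x,\ti{x})_{\sigma^{-1}(i)}$ for $\sigma\in H$, and $F(0,x_{0},x_{0})=0$. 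The two normalisations force $D_{\ti{x}}F(0,x_{0},x_{0})$ to be the identity of $(\C^{2})^{n}$, so the implicit function theorem provides $\varepsilon>0$, a neighbourhood of $x_{0}$, and a smooth $H$-equivariant solution $\ti{x}=G(t,x)$ of $F(t,x,\ti{x})=0$ with $G(0,\,\cdot\,)=\id$, unique near $x_{0}$; by equivariance it descends to a map $\Phi_{t}$ defined near $\xb_{\, 0}$ in $\snx$, which for $t$ small is a homeomorphism onto a neighbourhood of $\xb_{\, 0}$. Setting, for $\xb=[x]$,
\[
\psi_{t,\xb}(p)=\phi_{t}\bigl(G(t,x),\,\cdot\,\bigr)^{-1}\bigl(\phi_{0}(x,p)\bigr)
\]
near $\supp\xb$, and extending continuously by the identity away from $\supp\xb$, one checks directly that $\psi_{t,\xb}$ is independent of the chosen lift, that $\psi_{0,\xb}=\id$ (since $G(0,x)=x$), that $\psi_{t,\xb}$ is a composition of a biholomorphism onto an open subset of $\C^{2}$ for $J^{\re}_{0,\xb}$ and the inverse of such a biholomorphism for $J^{\re}_{t,\Phi_{t}(\xb)}$, hence is itself a biholomorphism between these two structures near $\xb$, and that $\psi_{t,\xb}(x_{i})=G(t,x)_{i}$ by the relation $F(t,x,G(t,x))=0$, so that $\sn\psi_{t,\xb}(\xb)=\Phi_{t}(\xb)$. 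This establishes (i), (ii) and (iii) over a neighbourhood of $\xb_{\, 0}$.

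\par\medskip\noindent\textbf{Globalisation.} As $\snx$ is compact, cover it by finitely many neighbourhoods carrying such local data $(\phi^{\alpha},G^{\alpha},\psi^{\alpha},\Phi^{\alpha})$, valid for $t\in B(0,\varepsilon_{\alpha})$, and put $\varepsilon=\min_{\alpha}\varepsilon_{\alpha}$; shrinking $\varepsilon$ if needed, all the $\psi^{\alpha}_{t,\xb}$ and $\Phi^{\alpha}_{t}$ are uniformly $\mathcal{C}^{0}$-close to the identity. I would glue them by a partition of unity subordinate to the cover, averaging the (small) displacements through the exponential map of a fixed Riemannian metric; to accommodate the quotient singularities of $\snx$ the averaging would be performed $\mathfrak{S}_{n}$-equivariantly on $X^{n}$, working stratum by stratum. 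The subtle point is that the interpolation has to take place \emph{away} from $\supp\xb$, so that on a fixed neighbourhood of each support point $\psi_{t,\xb}$ still coincides with one of the $\psi^{\alpha}_{t,\xb}$ and hence remains fibrewise holomorphic there; the glued map $\Phi_{t}\colon\xb\mapsto\sn\psi_{t,\xb}(\xb)$ then stays $\mathcal{C}^{0}$-close to the identity and is a homeomorphism of $\snx$, which is (iii) globally, while $\psi$ is continuous on $B(0,\varepsilon)\tim W$ for a suitable neighbourhood $W$ of $Z_{n}$.

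\par\medskip\noindent I expect this last step to be the main obstacle: reconciling a partition-of-unity gluing with the simultaneous demands that $\psi_{t,\xb}$ be fibrewise holomorphic near $\xb$ and that $\Phi_{t}$ be a genuine homeomorphism of the \emph{singular} space $\snx$. The map $\psi$ so obtained is exactly what Proposition \ref{NouvPropUn} needs: subdividing a path in $\mathcal{B}$ into steps of length $<\varepsilon$ and composing the resulting homeomorphisms $(\xi,\xb)\mapsto\bigl((\psi_{t,\xb})_{*}\xi,\,\sn\psi_{t,\xb}(\xb)\bigr)$ of the topological Hilbert schemes then yields the assertions of that proposition.
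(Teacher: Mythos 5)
Your local construction is sound and is essentially the one the paper uses: parametrised holomorphic coordinates for $J^{\re }_{0}$ and $J^{\re }_{t}$, a perturbation/implicit-function argument producing the displaced support $G(t,x)$, and $\psi_{t,\xb}=\phi_{t}(G(t,x),\cdot)^{-1}\circ\phi_{0}(x,\cdot)$. The gap is exactly where you say it is, and the fix you sketch does not close it. Averaging the displacements of the $\psi^{\alpha}_{t,\xb}$ through the exponential map of a Riemannian metric destroys fibrewise holomorphy: a Riemannian average of biholomorphisms is not a biholomorphism for any of the structures involved. Your fallback --- interpolating only away from $\supp\xb$ so that $\psi_{t,\xb}$ agrees with a single $\psi^{\alpha}_{t,\xb}$ near the support --- cannot work either. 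Condition (ii) requires $\psi_{t,\xb}$ to be holomorphic from $J^{\re }_{0,\xb}$ into the \emph{single} structure $J^{\re }_{t,\ub{y}}$ with $\ub{y}=\sn\psi_{t,\xb}(\xb)$; if near different points of $\supp\xb$ the map coincides with different local solutions $\psi^{\alpha}$, $\psi^{\beta}$, the target structures $J^{\re }_{t,\Phi^{\alpha}_{t}(\xb)}$ and $J^{\re }_{t,\Phi^{\beta}_{t}(\xb)}$ disagree, while if the index $\alpha$ is chosen once per $\xb$ it must jump discontinuously as $\xb$ crosses the overlaps.

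The missing idea is the paper's: transfer the whole gluing problem into the relative coordinates $\check{\phi}^{i}_{t}$. Condition (ii) then says that the unknown, written as $\check{\phi}^{i}_{t}\circ\check{\psi}_{t}\circ\bigl(\check{\phi}^{i}_{0}\bigr)^{-1}$, is a family $u_{t,\ub{y}}$ of biholomorphisms between open sets of $\C^{2}$ for the \emph{standard} complex structure on both source and target, close to the identity, with $u_{0,\ub{y}}=\id$. For the fixed standard structure, holomorphy is a linear condition, so applying a partition of unity in the $\ub{y}$-variable to two such local solutions produces, fibre by fibre, a convex combination of holomorphic maps, which is again holomorphic and again a biholomorphism close to the identity; this is the precise content of the paper's remark that biholomorphisms close to the identity form a contractible set, and it is what makes cut-off functions legitimate here. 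Gluing in these coordinates and transporting back yields the global $\psi_{t}$; the induced map $\flgd{\xb}{\sn\psi_{t,\xb}\he(\xb)}$ comes from a $\mathfrak{S}_{n}$-equivariant small perturbation of the identity of $X^{n}$, hence is a homeomorphism of $\snx$, giving (iii).
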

\begin{proof}
We can choose a family of maps $\theta _{t}\he$ varying smoothly with $t$ such that for all $\xb$ in
$\snx$ and $t$ in $B(0,r)$, $\theta _{t,\xb}\he$ is a biholomorphism between two \nbh s of $\xb$ endowed with the structures $J^{\re }_{t,\xb}$ and $J^{\re }_{0,\xb}$, and such that for all $\xb$ in $ \snx$, $\theta _{0,\xb}\he=\id$. We take, as in the proof of Proposition
\ref{PropUnSectionUnSubsectionUn}, a system $\bigl( \phi ^{i}_{\xb}\bigr)_{1\le i\le N}$ of holomorphic relative coordinates for $J_{0}^{\re }$ with respect to a covering
$\bigl(\ti{U}_{i}\he \bigr)_{1\le i\le N}$ of $\snx$ such that
$\flgd{\xb}{\sn \phi ^{i}_{\xb}(\xb)}$ is a homeomorphism between $\ti{U}_{i}\he$ and its image $\ti{V}_{i}\he$ in $\sn\, \C^{2}$. We define holomorphic relative coordinates
$\bigl( \phi ^{i}_{t,\xb}\bigr)_{1\le i\le N}$ for $J_{t}^{\rel}$ by the formula
$ \phi ^{i}_{t,\xb}(p)= \phi ^{i}_{\xb}\bigl( \theta _{t,\xb}\he(p)\bigr)$. For small $t$, after shrinking $\ti{U}_{i}\he$ if necessary,
$\flgd{\xb}{\sn \phi ^{i}_{t,\xb}(\xb)}$ is still a homeomorphism: indeed the map $\flgd{\xb}{\sn\phi _{t,\xb}^{i}(\xb)}$ is obtained from the $\mathfrak{S}_{n_{1}}\he\times\cdots \times \mathfrak{S}_{n_{k}}\he$ equivariant smooth map
$
\flgd{(x_{1},\dots ,x_{n})}{\bigl( \phi _{t}\he(x_{1},\dots ,x_{n},x_{1}),\dots ,\phi _{t}\he(x_{1},\dots ,x_{n},x_{n})\bigr)}.
$
Then we use the fact that a sufficiently small smooth perturbation of a smooth diffeomorphism  remains a smooth diffeomorphism.
\par\medskip
Let $\ti{Z}_{n}\he\suq \sn\, \C^{2}\times\C^{2}$ be the incidence variety of $\C^{2}\be$. The map $\apl{\check{\phi }_{t}^{i}}
{(\xb,p)}{\bigl( \sn\phi ^{i}_{t,\xb}(\xb),\phi ^{i}_{t,\xb}(p)\bigr)}$ is a homeomorphism between two \nbh s of $Z_{n}\he$ and $\ti{Z}_{n}\he$ over $\ti{U}_{i}\he$ and $\ti{V}_{i}\he$.
If we define $\apl{\check{\psi }_{t}\he}{(\xb,p)}{\bigl( \sn\psi _{t,\xb}\he(\xb),\psi _{t,\xb}\he(p)\bigr)}$\!\!, the condition (ii) of the proposition means that
$\check{\phi }_{t}^{i}\circ\check{\psi }_{t}\he\circ\bigl( \check{\phi }_{0}^{i}\bigr)^{-1}$ is of the form $\flgd{(\ub{y},p)}{\bigl( \sn u_{t,\ub{y}}\he(\yb),u_{t,\ub{y}}\he(p)\bigr)}$ where $\ub{y}\in\ti{V}_{i}\he$ and $u_{t,\ub{y}}\he$ is a biholomorphism between a \nbh\ of $\yb$ and its image (both endowed with the standard complex structure of $\C^{2}\be$), varying smoothly with $t$ and $\yb$. The condition (i) means that $u_{0,\ub{y}}\he=\id$. Thus $\bigl( \psi _{t}\he\bigr)_{||t||\le\varepsilon }$ can be constructed on small open sets of $\snx$. Since biholomorphisms close to identity form a contractible set, we can, using cut-off functions, glue together the local solutions on $\snx$ to obtain a global one. The map $\flgd{\xb}{\sn\psi _{t,\xb}\he(\xb)}$ is induced by a smooth $\mathfrak{S}_{n}$-\,equivariant map of $X^{n}\be$ into $X^{n}$ (and is a small perturbation of the identity map if $||t|| $ is small enough), thus a $\mathfrak{S}_{n}$-\,equivariant diffeomorphism of $X^{n}$. We have therefore defined a family of maps $\bigl( \psi _{t}\he\bigr)_{||t||\le\varepsilon }\he$ satisfying the conditions (i), (ii) and (iii).
\end{proof}
We can now prove Proposition \ref{NouvPropUn}.
\par\medskip
\noindent\textit{Proof of Proposition \ref{NouvPropUn}.}
(i)
We have
\[
\bigl( \xn,\bigl\{J_{t}^{\re }\bigr\}_{t\in B(0,r)}\he\bigr)\!=\!\Bigl\{
(\xi ,\xb,t)\ \textrm{such that}\ \xb\in\snx,\ t\in B(0,r),\ \xi \in \bigl( W_{\xb}^{[n]},J_{t,\xb}^{\re }\bigr),\ HC(\xi )=\xb
\Bigr\}\cdot
\]
A topological trivialization of this family over $B(0,r)$ near zero is given by the map
\[
\apl{\Gamma }{\xnj{0}\times B(0,\varepsilon )}{\bigl( \xn,
\bigl\{J_{t}^{\re }\bigr\}_{t\in B(0,\varepsilon )}\bigr)}
\]
defined by
$\Gamma (\xi ,\xb,t)=\bigl( \psi _{t,\xb*}\he\xi ,\psi _{t,\xb}\he(\xb),t\bigr)$, where $\psi $ is given by Proposition \ref{PropUnSectionUnsubsectionDeux}. This proves that the relative Hilbert scheme is locally topologically trivial over $B(0,r)$.
\par\medskip
(ii) The set $\mathcal{B}$ being connected, point (i) shows that $\xnj{0}$ and
$\xnj{1}$ are homeomorphic. Since $\pi _{1}(\mathcal{B})=0$, if we consider two paths
$\bigl( J_{0,t}^{\re }\bigr)_{0\le t\le 1}\he$ and $\bigl( J_{1,t}^{\re }\bigr)_{0\le t\le 1}\he$ between $J_{0}^{\re }$ and $J_{1}^{\re }$, we can find a smooth family $\bigl( J_{s,t}^{\re }\bigr)_{\genfrac{}{}{0pt}{}{0\le s \le 1}{0\le t \le 1}}\he$ which is an homotopy between the two paths. The relative associated Hilbert scheme over $[0,1]\times [0,1]$ is locally topologically trivial, hence globally trivial since $[0,1]\tim[0,1]$ is contractible. This shows that the homeomorphisms between
$\xnj{0}$ and $\xnj{1}$ constructed by the procedure above belong to a canonical homotopy class.\hfill$\square$
\par\medskip
As a consequence of this proposition, there exists a ring $H\ee\be(\xn,\Q)$ (resp. $K( \xn)$) such that for any $J^{\re }\be$ close to $J$, $H\ee\be(\xn,\Q)$ (resp. $K( \xn)$) and $H\ee\be\bigl(\xnj{}, \Q\bigr)$ (resp. $K\bigl( \xnj{}\bigr)$) are canonically isomorphic.
\par\medskip
In the sequel, we will deal with products of Hilbert schemes. We can of course consider products of the type $\bigl( \xnj{n}\bigr)\tim\bigl( X^{[m]}_{J^{\re}_{m}}\bigr)$, but in practice it is necessary to work with pairs of relative integrable complex structures parametrized by $(\xb,\yb)$ in $\snx\tim S^{m}_{\be}X$. Let us introduce the incidence set
\begin{equation}\label{EqUn}
 Z_{n\tim m}\he=\bigl\{(\xb,\yb,p)\ \textrm{in}\ \bigl( \snx\tim S^{m}_{\be}X\bigr)\tim X\ \textrm{such that}\ p\in\xb\cup\yb\bigr\}\cdot
\end{equation}
Let $W$ be a small \nbh\ of
$Z_{n\times m}\he$ and let $J^{1,\rel}\be$ and $J^{2,\rel}\be$ be two relative integrable complex structures on the fibers of $\apl{\pr_{1}\he}{W}{\snx\times S^{m}\be X}$.
\begin{definition}\label{DefUnSectionUnSubsectionTrois}
The product Hilbert scheme $\bigl( X^{[n]\times [m]}\be,J^{1,\rel}\be,J^{2,\rel}\be\bigr)$ is defined by
\begin{align*}
\bigl( X^{[n]\times [m]}\be,J^{1,\rel}\be,J^{2,\rel}\be\bigr)=
\Bigl\{(\xi ,\eta ,\xb,\yb)\ \textrm{such that}\ &\xb\in\snx,\ \yb\in S^{m}\be X,\
\xi \in\bigl( W^{[n]}_{\xb,\yb},J^{1,\rel}_{\xb,\yb}\bigr),\\
&\eta \in\bigl( W^{[m]}_{\xb,\yb},J^{2,\rel}_{\xb,\yb}\bigr),\
HC(\xi )=\xb,\ HC(\eta )=\yb\Bigr\}\cdot
\end{align*}
The same definition holds for products of the type $\Bigl( X^{[n_{1}\he]\tim\cdots\tim [n_{k}\he]}\be,J^{\, 1,\, \rel}\be,\dots, J^{\, k,\, \rel}\be\Bigr)$.
\end{definition}
If there exist two relative integrable complex structures $J_{n}^{\, \rel}$ and $J_{m}^{\, \rel}$ in \nbh s of $Z_{n}\he$ and $Z_{m}\he$ such that $J^{\, 1,\rel}_{\xb,\yb}=J^{\re }_{n,\xb}$ and $J^{\, 2,\rel}_{\xb,\yb}=J^{\re }_{m,\xb}$
in small \nbh s of $\xb$ and $\yb$, we have
\[
\bigl( X^{[n]\times [m]}\be ,J^{\, 1,\rel}\be,
J^{\, 2,\rel}\be \bigr)=\xnj{n} \times X^{[m]}_{J^{\re}_{m}}.
\]
If $\bigl( J^{1,\rel}_{t},J^{2,\rel}_{t}\bigr)_{t\in B(0,r)}\he$ is a smooth family of relative integrable complex structures, it can be shown as in Propositions \ref{PropUnSectionUnsubsectionDeux} and \ref{PropDeuxSectionUnsubsectionDeux} that the family
$\Bigl( X^{[n]\times [m]}\be, \bigl\{J^{1,\rel}_{t}\bigr\}_{t\in B(0,r)}\he,\bigl\{J^{2,\rel}_{t}\bigr\}_{t\in B(0,r)}\he\Bigr)$ is topologically trivial over
$B(0,r)$. Thus the product Hilbert schemes $\bigl( X^{[n]\tim[m]}\be,J^{1,\rel}\be,J^{2,\rel}\be\bigr)$ is isomorphic to products
$\xnj{n}\tim X^{[m]}_{J^{\re}_{m}}$ of usual Hilbert schemes. If the structures $J^{1,\rel}\be$ and $J^{2,\rel}\be$ are equal,
$\bigl( X^{[n]\tim[m]}\be,J^{1,\rel}\be,J^{2,\rel}\be\bigr)$ consists of pairs of schemes of given support (parametrized by $\snx\tim S^{m}\be X$) for the \emph{same} integrable structure. These product Hilbert schemes are therefore well adapted for the study of incidence relations.

 \section{Incidence varieties and Nakajima operators}\label{SecDeux}
\subsection{Construction of incidence varieties}
If $J$ is an integrable complex structure on $X$, the \emph{incidence variety} $X^{\npn}\be$ is classically defined by
\[
X^{\npn}\be=\bigl\{(\xi, \xi')\ \textrm{such that}\ \xi \in\xn,\ \xi '\in X^{[n']}\be\ \textrm{and}\ \xi \suq\xi '\bigr\}\cdot
\]
The incidence variety
$X^{\npn}\be$ is never smooth unless $n'=n+1$ (see \cite{SchHilTi}). We have three maps
$\apl{\lambda }{X^{\npn}\be}{\xn}$\!\!, $\apl{\nu}{X^{\npn}\be}{X^{[n']}\be}$ and
$\apl{\rho }{X^{\npn}\be}{S^{n'-n}\be X}$ given by $\lambda (\xi ,\xi ')=\xi $,
$\nu (\xi ,\xi ')=\xi '$ and $\rho (\xi ,\xi ')=\textrm{supp}\bigl(\ii_{\xi }\he/_{\ds \ii_{\xi '}\he}\bigr)$. Note that, by definition, $(\lambda ,\nu )$ is injective.
\par\medskip
If $J$ is not integrable, we can define $X^{\npn}\be $ using the relative construction.
Let $J^{\re }_{n\times(n'-n)}$ be a relative integrable complex structure in a \nbh\ of
$Z_{n\times(n'-n)}\he$ in $\snx\tim S^{n'-n}\be X\tim X$ (see (\ref{EqUn})).
\begin{definition}\label{DefUnSectionDeuxSubsectionUn}
The incidence variety $\bigl( X^{\npn},J_{n\tim(n'-n)}^{\rel}\bigr)$ is defined by
\begin{align*}
\bigl(X^{\npn}\be, J_{n\tim(n'-n)}^{\re } \bigr)=\Bigl\{
(\xi ,\xi ',\xb,\yb)&\ \textrm{such that}\ \xb\in\snx,\ \yb\in S^{n'-n}\be X,\
\xi \in\bigl( W_{\xb}^{[n]},J_{n\tim(n'-n),\, \xb,\,  \yb}^{\re }\bigr),\\
&\xi' \in\bigl( W_{\xb\, \cup\, \yb}^{[n']},J_{n\tim(n'-n),\, \xb,\, \yb}^{\re }\bigr),\ \xi \suq\xi ',\ HC(\xi )=\xb,\ \rho (\xi ,\xi ')=\yb
\Bigr\}\cdot
\end{align*}
\end{definition}
Let $J^{\re }_{n\times n'}$ be a relative integrable complex structure in a \nbh\ of
$Z_{n\tim n'}\he$ such that for every
$\ub{u}\in \snx$ and $\ub{v}\in S^{n'-n}\be X$,
$J^{\re }_{n\times n',\ub{u},\ub{u}\, \cup\, \ub{v}}=
J^{\re }_{n\times (n'-n),\ub{u},\ub{v}}$. Then
\[
\bigl( X^{\npn},J^{\re }_{n\times (n'-n)}\bigr)\suq \bigl( X^{[n]\times [n']}\be ,
J^{\re }_{n\times n'},J^{\re }_{n\times n'}\bigr).
\]
If
$\bigl\{J^{\re }_{t,n\times n'}\bigr\}_{t\in B(0,r)}\he$ is a smooth family of relative complex structures, we can take, as in Proposition \ref{PropDeuxSectionUnsubsectionDeux}, a topological trivialization of
$\bigl( X^{[n]\times [n']}\be ,
\bigl\{J^{\re }_{t,n\times n'}\bigr\}_{t\in B(0,r)}\he,\bigl\{J^{\re }_{t,n\times n'}\bigr\}_{t\in B(0,r)}\he\bigr)$.
If we define $J^{\re }_{t,n\times (n'-n)}$ in a \nbh\ of $Z_{n\tim(n'-n)}\he$ by the formula
$J^{\re }_{t,n\times (n'-n),\ub{u},\ub{v}}=J^{\re }_{t,\, n\times n',\ub{u},\, \ub{u}\, \cup\, \ub{v}}$, then
we can choose the trivialization so that
the subfamily
$\bigl( X^{\npn}\be ,\bigl\{J^{\re }_{t,n\times (n'-n)}\bigr\}_{t\in B(0,r)}\he\bigr)$ is sent to the product
$U^{\npn}\be\tim B(0,\varepsilon )$, where $U$ is an open set of $\C^{2}\be$. This means that the fami\-ly
\[
\Bigl\{ \Bigl(X^{\npn}\be,\bigl\{J^{\re}_{n\tim(n'-n)}\bigr\}_{t\in B(0,r)}\he\Bigr), \Bigl(X^{[n]\times [n']}\be,\bigl\{J^{\re}_{n\tim n'}\bigr\}_{t\in B(0,r)}\he,\bigl\{J^{\re}_{n\tim n'}\bigr\}_{t\in B(0,r)}\he\Bigr)\Bigr\}
\]
is locally, hence globally topologically trivial over $B(0,r)$.
\par\medskip
The natural morphism
from $\bigl(X^{\npn}\be ,J_{n\tim(n'-n)}\bigr)$ to $\snx\times S^{n'-n}\be X$
is locally equivalent over $\snx\tim S^{n'-n}\be X$
to the natural morphism $\flgd{U^{\npn}\be }{S^{n}\be U\tim S^{n'-n}\be U.}$ This enables us to define a stratification on $X^{\npn}\be$ by patching together the analytic stratifications of a collection of $U^{\npn}_{i}$. In this way, $\bigl(X^{\npn}\be ,J_{n\tim(n'-n)}^{\re }\bigr)$ becomes a stratified \mbox{$CW$-complex} such that for each stratum $S$, $\dim(\overline{S\he}\backslash S)\le \dim S-2$. In particular, each stratum has a homology class.
\par\medskip
\noindent Let us introduce the following notations:
\begin{enumerate}
 \item [(i)] The inverse image of the small diagonal of $\snx$ by $\apl{HC}{\xnj{n}}
{\snx}$ will be denoted by $\bigl( X^{[n]}_{0},J_{n}^{\re }\bigr)$.
\item [(ii)] The inverse image of the small diagonal of $S^{n'-n}\be X$ by
$\apl{\rho }{\bigl( X^{\npn}\be,J_{n\times (n'-n)}^{\re }\bigr)}{S^{n'-n}\be X}$ will be denoted by
$\bigl( X^{\npn}_{0},J_{n\tim(n'-n)}^{\re }\bigr)$.
\end{enumerate}
In the integrable case, $X^{\npn}_{0}$ is stratified by analytic sets $\bigl( Z_{l}\he\bigr)_{l\ge 0}\he$ defined by
\begin{equation}\label{EqDeux}
 Z_{l}\he=\bigl\{(\xi ,\xi ')\in X^{\npn}_{0}\ \textrm{such that if}\ x=\rho (\xi ,\xi '), \ \ell_{x} (\xi )=l\bigr\};
\end{equation}
$Z_{0}\he $ is irreducible of complex dimension $n'+n+1$, and all the other $Z_{l}\he$ have smaller dimensions (see \cite{SchHilLe}). By the same argument as above, this stratification also exists in the almost complex case. We prove the topological irreducibility of $Z_{0}\he $ in the following lemma:
\begin{lemma}\label{LemUnSectionDeuxSubsectionUn}
 Let $\bigl[\, \ba {Z\vphantom{^{'}}}_{0}\he\bigr]$ be the fundamental homology class of $\ba {Z\vphantom{^{'}}}_{0}\he$. Then
\[
H_{2(n'+n+1)}\he\bigl( X_{0}^{\npn},J_{n\tim(n'-n)}^{\re },\Z\bigr)=\Z\, .\bigl[\, \ba {Z\vphantom{^{'}}}_{0}\he\bigr].
\]
\end{lemma}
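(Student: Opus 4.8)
The plan is to reduce the computation to the Borel--Moore homology of the open stratum $Z_0$ and then to establish the topological irreducibility of $Z_0$. Write $Y:=\bigl(X_0^{[n',n]},J^{\mathrm{rel}}_{n\times(n'-n)}\bigr)$ and $d:=2(n'+n+1)$, the top real dimension. As recalled above, $Y$ is a compact stratified $CW$-complex of dimension $d$, stratified by the $Z_l$, with $Z_0$ the unique stratum of dimension $d$, and $Y\setminus Z_0=\bigcup_{l\ge 1}Z_l$ of real dimension $\le d-2$; in particular $Z_0$ is open in $Y$, and it is dense because the local model $U_0^{[n',n]}$ is pure of dimension $n'+n+1$ (see \cite{SchHilLe}), so $\overline{Z}_0=Y$. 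The long exact sequence of the pair $(Y,Y\setminus Z_0)$, together with $H_d(Y\setminus Z_0;\mathbb{Z})=H_{d-1}(Y\setminus Z_0;\mathbb{Z})=0$ (a $CW$-complex of dimension $\le d-2$), gives $H_d(Y;\mathbb{Z})\simeq H_d(Y,Y\setminus Z_0;\mathbb{Z})$; since $Y$ is compact and $Y\setminus Z_0$ is a closed subcomplex, the right-hand side is the Borel--Moore homology $H^{\mathrm{BM}}_d(Z_0;\mathbb{Z})$, and the fundamental class $\bigl[\overline{Z}_0\bigr]$ corresponds to the Borel--Moore fundamental class of $Z_0$. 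Finally, the locus of points of $Z_0$ at which $Z_0$ fails to be a topological $d$-manifold is closed of real dimension $\le d-2$ (it is covered by the singular loci of the local models), hence $H^{\mathrm{BM}}_d(Z_0)\simeq H^{\mathrm{BM}}_d\bigl(Z_0^{\mathrm{sm}}\bigr)$; and since $Z_0^{\mathrm{sm}}$ is an oriented $d$-manifold (the orientation being induced fibrewise by the relative complex structures), this group is free of rank $\#\pi_0\bigl(Z_0^{\mathrm{sm}}\bigr)$. Thus everything comes down to proving that $Z_0^{\mathrm{sm}}$, equivalently $Z_0$, is connected.

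To prove connectedness I would use the elementary description of $Z_0$. On $Z_0$ one has $x:=\rho(\xi,\xi')\notin\supp(\xi)$, so $\xi'$ splits as a disjoint union $\xi\sqcup\zeta$ where $\zeta$ has length $n'-n$ and is concentrated at the single point $x$; the assignment $(\xi,\xi')\mapsto(\xi,\zeta)$ is then a homeomorphism from $Z_0$ onto the open subset $\bigl\{(\xi,\zeta)\ \textrm{such that}\ \supp(\xi)\cap\supp(\zeta)=\emptyset\bigr\}$ of $X^{[n]}\times X^{[n'-n]}_0$ (this is a purely local operation on the analytic models, hence makes sense in the almost-complex setting). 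The complementary ``incidence'' locus is closed of real codimension $4$, and both $X^{[n]}$ and $X^{[n'-n]}_0$ are connected -- the first by Theorem \ref{MainVoisin}, the second because it is locally modelled on the irreducible punctual Hilbert scheme of $\mathbb{C}^2$ and projects onto $X$ with connected fibres. Hence $X^{[n]}\times X^{[n'-n]}_0$ and its dense smooth part $X^{[n]}\times\bigl(X^{[n'-n]}_0\bigr)^{\mathrm{sm}}$ are connected, and removing from this connected manifold the closed subset of codimension $\ge 2$ cut out by the incidence condition leaves it connected; as this manifold is exactly $Z_0^{\mathrm{sm}}$, connectedness of $Z_0$ follows.

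Putting these steps together yields $H_d(Y;\mathbb{Z})\simeq H^{\mathrm{BM}}_d(Z_0;\mathbb{Z})\simeq\mathbb{Z}$, generated by $\bigl[\overline{Z}_0\bigr]$, which is the assertion of the lemma. The only substantial point is the connectedness argument of the second paragraph: in the integrable case it is subsumed by the irreducibility of $X_0^{[n',n]}$ proved in \cite{SchHilLe}, but in the almost-complex setting $Z_0$ is not a priori a complex analytic variety, so one genuinely needs to go through the disjoint-union description and reduce to the (already understood) connectedness of $X^{[n]}$ and $X^{[n'-n]}_0$ together with the high codimension of the incidence locus. Everything else (the long exact sequences, the identification with Borel--Moore homology, the manifold computation) is formal.
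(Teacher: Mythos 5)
Your proof is correct and follows essentially the same route as the paper: the heart of both arguments is the reduction to the top Borel--Moore homology of $Z_{0}$, the splitting $\xi'=\xi\sqcup\zeta$ identifying $Z_{0}$ with the disjoint-support locus in $X^{[n]}\times X^{[n'-n]}_{0}$, and Brian\c{c}on's theorem giving the irreducibility of the punctual fibre. The only cosmetic difference is that the paper passes to the compact product-type Hilbert scheme $\widetilde{Z}_{0}\simeq X^{[n]}\times X^{[n'-n]}_{0}$, whose complement of $Z_{0}$ has dimension $2(n'+n-1)$, and computes its top ordinary homology, whereas you stay on $Z_{0}$ itself and argue connectedness and orientability of its smooth locus directly.
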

\begin{proof}
 It is enough to prove that the Borel-Moore homology group
$H^{\textrm{lf}}_{2(n'+n+1)}(Z_{0}\he,\Z)$ is $\Z$, since all the remaining strata
$\bigl( Z_{l}\he\bigr)_{l\ge 1}\he$ have dimensions smaller than $2(n'+n+1)-2$. Let
\begin{align*}
 \ti{Z}_{0}\he=\Bigl\{(\xi ,\eta ,\xb,p)\ \textrm{such that}\ \xb\in \snx&,\ p\in X,\
\xi \in\bigl( W_{\xb,\, (n'-n)p}^{[n]},J^{\re }_{n\times (n'-n),\,\xb,\,  (n'-n)p}\bigr),\ HC(\xi )=\xb,\\
&\eta \in \bigl( W_{\xb,\, (n'-n)p}^{[n'-n]},J^{\re }_{n\times (n'-n),\, \xb,\,  (n'-n)p}\bigr),\ HC(\eta )=(n'-n )p\Bigr\}\cdot
\end{align*}
There is a natural inclusion $\xymatrix@C=10pt{Z_{0}\he\ar@{^{(}->}[r]&\ti{Z}_{0}\he}$ given by
$\flgd{\bigl(\xi ,\xi ',\xb,(n'-n)p\bigr)}{(\xi ,\xi '_{\vert p},\xb,p)}$. Remark that $\ti{Z}_{0}\he$ is compact.
Since $\dim (\ti{Z}_{0}\he\backslash Z_{0}\he)\leq 4n+2(n'-n-1)=2(n'+n-1)$, it suffices to show that
$H_{2(n'+n+1)}\he(\ti{Z}_{0}\he,\Z)=\Z$. $\ti{Z}_{0}\he$ is a product-type Hilbert scheme homeomorphic to
$\xnj{n}\times\bigl( X_{0}^{[n'-n]},J^{\re }_{n'-n}\bigr)$.
Since
$\bigl( X_{0}^{[n'-n]},J^{\re }_{n'-n}\bigr)$ is by Brian\c{c}on's theorem \cite{SchHilBr} a topological fibration on $X$ whose fiber is homeomorphic to
an irreducible algebraic variety of complex dimension $n'-n-1$,  we obtain the result.
\end{proof}
\subsection{Nakajima operators}
We are now going to construct Nakajima operators $q_{n}\he(\alpha )$ in the almost-complex context.
If $n'>n$, let us define
\begin{equation}\label{EqTrois}
 Q^{\npn}\be=\ba{Z\vphantom{^{'}}}_{0}\he\suq \bigl(X^{[n]\times[n']}\be\times X,{J}_{n\tim n'}^{\re },J_{n\tim n'}^{\re }\bigr)\times X,
\end{equation}
where the map on the last coordinate is given by the relative residual morphism and $Z_{0}\he$ is defined by (\ref{EqDeux}). Since the pair
$\bigl(Q^{\npn}\be,\xnb{[n]\tim[n']}\tim X \bigr)$ is topologically trivial when $J_{n\tim n'}^{\re } $ varies, the cycle class $\bigl[ Q^{\npn}\be\bigr]\in H_{2(n'+n+1)}\he
\bigl( \xn\tim X^{[n']}\be \times X,\Z\bigr)$ is independent of  $J_{n\tim n'}^{\re } $.

\begin{definition}\label{DefUnSectionDeuxSubsectionDeux}
 Let $\alpha \in \hb(X,\Q)$ and $j\in\N\ee\be$. We define the Nakajima operators $\mathfrak{q}_{j}\he(\alpha )$ and
$\mathfrak{q}_{-j}\he(\alpha )$ as follows:
\[
\begin{array}{rcccl}
 \mathfrak{q}_{j}\he(\alpha )\mkern -10 mu&:&\mkern -15 mu\bop_{n\in\N}\hb (\xn,\Q)&\xymatrix{\ar[r]&}&\bop_{n\in\N}\hb(X^{[n+j]}\be,\Q)\\
&&\tau &\xymatrix{\ar@{|->}[r]&}&PD^{-1}\be\Bigl[ \pr_{2*}\he\bigl( \bigl[ Q^{[n+j,n]}\bigr]\cap(\pr_{3}\ee\alpha \cup \pr_{1}\ee\tau )\bigr)\Bigr]\\[4ex]
\mathfrak{q}_{-j}\he(\alpha )\mkern -10 mu&:&\bop_{n\in\N}\hb (X^{[n+j]}\be,\Q)&\xymatrix{\ar[r]&}&\bop_{n\in\N}\hb(X^{[n]}\be,\Q)\\
&&\tau &\xymatrix{\ar@{|->}[r]&}&PD^{-1}\be\Bigl[ \pr_{1*}\he\bigl( \bigl[ Q^{[n+j,n]}\bigr]\cap(\pr_{3}\ee\alpha \cup \pr_{2}\ee\tau )\bigr)\Bigr]
\end{array}
\]
where $\pr_{1}\he$, $\pr_{2}\he$ and $\pr_{3}\he$ are the projections from $\xn\times X^{[n+j]}\be \times X$ to each factor and $PD$ is the Poincar\'{e} duality isomorphism between cohomology and homology. We also set $\mathfrak{q}_{0}\he(\alpha )=0$
\end{definition}
\begin{remark}\label{RemUnSectionDeuxSubsectionDeux}
 Let  $|\alpha |$ be the degree of $\alpha $, then $q_{j}\he(\alpha )$ maps
$H^{i}\be(\xn,\Q)$ to $H^{i+|\alpha |+2j-2}\be(X^{[n+j]}\be,\Q)$.
\end{remark}
We now prove the following extension to the almost-complex case of Nakajima's theorem \cite{SchHilNa}:
\begin{theorem}\label{PropUnSectionDeuxSubsectionDeux}
 For $i$, $j\in\Z$ and $\alpha $, $\beta \in \hb (X,\Q)$ we have
\[\mathfrak{q}_{i}\he(\alpha )\mathfrak{q}_{j}\he(\beta )-(-1)^{|\alpha |\, |\beta |}\mathfrak{q}_{j}\he(\beta )\mathfrak{q}_{i}\he(\alpha )=i\, \delta _{i+j,0}\left( \int_{X}\alpha \beta\right)\id \].
\end{theorem}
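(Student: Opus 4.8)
The plan is to reduce the identity, by bilinearity in $\alpha$ and $\beta$, to homogeneous classes, and then to split it into a few cases according to the signs of $i$ and $j$; the case $i=0$ or $j=0$ being trivial since $\mathfrak{q}_0=0$. First I would record the elementary fact that, up to a sign depending only on the parity of $i$ and on $|\alpha|$, the operator $\mathfrak{q}_{-i}(\alpha)$ is the adjoint of $\mathfrak{q}_i(\alpha)$ for the Poincaré pairing: by Definition~\ref{DefUnSectionDeuxSubsectionDeux} both are given by the \emph{same} cycle $[Q^{[n+i,n]}]$, one pushed forward to the first and the other to the second factor, so this is the transpose-of-a-correspondence identity together with a Koszul sign. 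Combining adjointness with the antisymmetry of the graded bracket, it suffices to treat $i>0$, and the cases are then: (a) $i>0$ and $j>0$; (b) $i>0$ and $j=-l$ with $l>0$, $l\neq i$; (c) $i>0$ and $j=-i$.

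Next I would realize the two compositions $\mathfrak{q}_i(\alpha)\,\mathfrak{q}_j(\beta)$ and $\mathfrak{q}_j(\beta)\,\mathfrak{q}_i(\alpha)$ as actions by correspondence of explicit cycle classes, using that every $X^{[m]}$ is a manifold. Fixing one relative integrable structure on a neighbourhood of the appropriate union incidence set over a product of symmetric powers of $X$, one builds --- exactly as the incidence varieties $(X^{[n',n]},J^{\rel})$ were built in Section~\ref{SecDeux} --- a ``double incidence'' stratified $CW$-complex parametrizing chains $\xi_0\subseteq\xi_1\subseteq\xi_2$ (for case (a)) or $\xi_0\supseteq\xi_1\subseteq\xi_2$ (for cases (b), (c)) with prescribed colengths and residual supports, each stratum carrying a homology class, and whose fundamental class computes the relevant composed correspondence. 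By the topological triviality of product Hilbert schemes and of incidence varieties under deformation of the relative structure (the analogues of Proposition~\ref{PropDeuxSectionUnsubsectionDeux} discussed after Definitions~\ref{DefUnSectionUnSubsectionTrois} and~\ref{DefUnSectionDeuxSubsectionUn}), the resulting operators are independent of all choices; and, over a covering of the base symmetric powers, these double incidence varieties, together with all projection and residual maps and with the components $\overline{Z}_0$ and the diagonals involved, are locally equivalent to the corresponding integrable objects over open subsets of $\C^2$. Hence the whole commutator identity reduces to an identity between cycle classes in the integrable local model.

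Once this reduction is in place, the three cases run as in the projective setting (Nakajima~\cite{SchHilNa}, see also Lehn~\cite{SchHilLe}). In case (a), the double-nested locus $\{\xi_0\subseteq\xi_1\subseteq\xi_2\}$, with the $i$- and $j$-steps taken in either order, maps birationally onto one and the same subvariety of $X^{[n]}\times X^{[n+i+j]}\times X\times X$ --- over the dense open set where the two residual points are distinct the intermediate subscheme is uniquely determined --- and the same classes $\alpha$, $\beta$ are assigned to the same residual points in both orders; the two compositions therefore coincide up to the Koszul sign $(-1)^{|\alpha||\beta|}$, which is precisely the asserted vanishing. In cases (b) and (c), the two compositions again agree over the locus where the ``added'' and ``removed'' points are distinct, so their difference is supported on the collision locus $\{p=q\}$ of the double incidence variety; a dimension count over the strata of that locus shows the difference class vanishes when $l\neq i$, which is case (b). In case (c) the collision locus is of the critical dimension, the difference is supported over the small diagonal of $X^{[n]}\times X^{[n]}\times X$, and the local analytic computation --- using the irreducibility and fundamental class of $\overline{Z}_0$ (Lemma~\ref{LemUnSectionDeuxSubsectionUn}) and the structure of the punctual Hilbert schemes of $\C^2$ via Briançon's theorem~\cite{SchHilBr} --- identifies it with $i$ times the diagonal correspondence, the multiplicity $i$ being the same intersection number as in the integrable case; since $\alpha$ and $\beta$ are now carried at the same point of $X$, the cap product collapses their cup product along that $X$ factor and produces the coefficient $\int_X\alpha\beta$. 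Tracking the Koszul signs through the cap and cup products then yields exactly $i\,\delta_{i+j,0}\bigl(\int_X\alpha\beta\bigr)\id$.

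The step I expect to be the main obstacle is the second one: constructing the almost-complex double incidence varieties with compatible relative integrable structures, and checking that they are stratified $CW$-complexes whose strata carry homology classes and which are locally equivalent, together with all relevant maps and cycles simultaneously, to the integrable $\C^2$-models --- so that the composition of the two Nakajima correspondences is computed by a single geometric object whose analysis genuinely reduces to the known projective computation. Within that reduced computation, the determination of the multiplicity $i$ in case (c) is the delicate point, but it is purely local and analytic and is inherited verbatim from the integrable case.
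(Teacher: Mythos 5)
Your proposal follows essentially the same route as the paper: both realize the two compositions as intersection products of incidence cycles inside a product Hilbert scheme built from a single relative integrable structure over a product of symmetric powers, use topological triviality under deformation of that structure and local equivalence to the integrable $\C^2$-model to justify generic transversality off the collision locus, cancel the off-diagonal contributions up to the Koszul sign via the explicit correspondence exchanging the two residual points, kill the excess terms by a dimension count when $i+j\neq 0$, and import the multiplicity $-i$ from the local integrable computation of Grojnowski and Ellingsrud--Str\o mme. The only cosmetic difference is that you package the intersection as a ``double incidence variety'' while the paper intersects two cycles $A$ and $B$ inside a five-factor product Hilbert scheme, which is the same geometric object.
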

\begin{proof}
 We adapt Nakajima's proof to our situation. The most interesting case is the computation of
$[\mathfrak{q}_{-i}\he(\alpha ),\mathfrak{q}_{j}\he(\beta )]$ when $i$ and $j$ are positive. We introduce the classes
$\bigl[ P_{ij}\he\bigr]$, resp. $\bigl[ Q_{ij}\he\bigr]$ in
\begin{align*}
 &\hs\bigl( \xna{n},\Q\bigr)\oti\hs\bigl( \xna{n-i},\Q\bigr)\oti\hs\bigl( \xna{n-i+j},\Q\bigr)\oti\hs\bigl( X,\Q\bigr)\oti\hs\bigl( X,\Q\bigr),\qquad\textrm{resp.}\\
&\hs\bigl( \xna{n},\Q\bigr)\oti\hs\bigl( \xna{n+j},\Q\bigr)\oti\hs\bigl( \xna{n-i+j},\Q\bigr)\oti\hs\bigl( X,\Q\bigr)\oti\hs\bigl( X,\Q\bigr),
\end{align*}
as follows:
\begin{align*}
\bigl[ P_{ij}\he\bigr]&:=p_{13*}\Bigl[ p_{124}\ee\bigl[ Q^{[n,n-i]\be}\bigr]\cdot p_{235}\ee \bigl[ Q^{[n-i+j,n-i]}\be\bigr]\Bigr],\quad\textrm{resp.}\\
\bigl[ Q_{ij}\he\bigr]&:=p_{13*}\Bigl[ p_{124}\ee\bigl[ Q^{[n+j,n]\be}\bigr]\cdot p_{235}\ee \bigl[ Q^{[n+j,n-i+j]}\be\bigr]\Bigr],
\end{align*}
where $Q^{[r,s]}\be$ is defined in (\ref{EqTrois}).
Then $\mathfrak{q}_{j}\he(\beta )\mathfrak{q}_{-i}\he(\alpha )$, resp. $\mathfrak{q}_{-i}\he(\alpha )\mathfrak{q}_{j}\he(\beta )$, is given by
\[
 \xymatrix@R=3pt{\tau \ar@{|->}[r]&{PD^{-1}\be\Bigl[ \pr_{3*}\he\bigl(\bigl[ P_{ij}\he\bigr]\cap \bigl( \pr_{5}\ee\beta \cup \pr_{4}\ee\alpha \cup \pr_{1}\ee\tau \bigr) \bigr)\Bigr]},& \hspace*{-6mm}\textrm{resp.}\\
 \tau \ar@{|->}[r]&{PD^{-1}\be\Bigl[ \pr_{3*}\he\bigl(\bigl[ Q_{ij}\he\bigr]\cap \bigl( \pr_{5}\ee\alpha \cup \pr_{4}\ee\beta \cup \pr_{1}\ee\tau \bigr) \bigr)\Bigr].}}
\]
First we deform all the relative integrable complex structures into a single one parametrized by $\snx\tim S^{n-i}\be X\tim S^{n-i+j}\be X\tim S^{2}\be X$.
Let $J_{n\tim(n-i)\tim(n-i+j)\tim 2}^{\, \re}$ be a relative integrable structure in a \nbh\ of
$Z_{n\tim(n-i)\tim(n-i+j)\tim 2}\he$, and
let
\[
Y=\Bigl( \xnb{[n]\tim[n-i]\tim[n-i+j]\tim [1]\tim [1]},J^{\re }_{n\tim(n-i)\tim(n-i+j)\tim 2}\leftarrow\textrm{5 times}\Bigr)
\]
be the product Hilbert scheme obtained by taking the same structure $J^{\re }_{n\tim(n-i)\tim(n-i+j)\tim 2}$ five times (see Definition \ref{DefUnSectionUnSubsectionTrois}),
where $J^{\re }_{n\tim(n-i)\tim(n-i+j)\tim 2}$ is identified with its pullback by
\[
\apl{\mu }{\snx\tim S^{n-i}\be X\tim S^{n-i+j}X\be \tim X\tim X}{\snx\tim S^{n-i}\be X\tim S^{n-i+j}X\be \tim S^{2}\be X.}
\]
Then, via the canonical isomorphism
\[
\hs\bigl( Y,\Q\bigr)\simeq\hs\bigl( \xn,\Q\bigr)\oti\hs\bigl( \xna{n-i},\Q\bigr)\oti\hs\bigl( \xna{n-i+j},\Q\bigr)\oti H_{*}\he \bigl( X,\Q\bigr)\oti H_{*}\he \bigl( X,\Q\bigr),
\]
since incidence varieties vary trivially in families,
the class $p_{124}\ee\bigl[ Q^{[n,n-i]}\be \bigr]$ is the homology class of the cycle
\[A=\Bigl\{(\xi ,\xi ',\xi'',s,t)\ \textrm{such that}\ \xi '\suq\xi \ \textrm{and}\ \rho (\xi ',\xi )=s\Bigr\}\cdot \]
In the same way, $p_{235}\ee\bigl[ Q^{[n-i+j,n-i]}\be \bigr]$ is the homology class of the cycle
\[
B=\Bigl\{(\xi ,\xi ',\xi'',s,t)\ \textrm{such that}\ \xi '\suq\xi ''\ \textrm{and}\ \rho (\xi ',\xi  '')=t\Bigr\}\cdot
\]
We now study the intersection of the cycles $A$ and $B$.
Let $p\in A\cap B$. We choose relative holomorphic coordinates
$\phi _{\xb,\yb,\zb,s,t}\he$ for $J^{\re }_{n\tim(n-i)\tim(n-i+j)\tim 2}$ such that
\[
\xymatrix{(\xb,\yb,\zb,s,t)
\ar@{|->}[r]&\bigl( S^{n}\be \phi _{\xb,\yb,\zb,s,t}\he(\xb),S^{n-i}\be \phi _{\xb,\yb,\zb,s,t}\he(\yb),S^{n-i+j}\be \phi _{\xb,\yb,\zb,s,t}\he(\zb),\phi _{\xb,\yb,\zb,s,t}\he(s),\phi _{\xb,\yb,\zb,s,t}\he(t)\bigr)
}
\]
is a local homeomorphism. The associated map given by
\[
\xymatrix{(\xi ,\xb,\xi ',\yb,\xi '',\zb,s,t)
\ar@{|->}[r]&\bigl(\phi _{\xb,\yb,\zb,s,t*}\he\xi , \phi _{\xb,\yb,\zb,s,t*}\he\xi ',S^{n-i+j}\be \phi _{\xb,\yb,\zb,s,t*}\he\xi '',\phi _{\xb,\yb,\zb,s,t}\he(s),\phi _{\xb,\yb,\zb,s,t}\he(t)\bigr)
}
\]
is a homeomorphism from a \nbh\ of $p$ to its image in $(\C^{2}\be)^{[n]}\be\tim(\C^{2}\be)^{[n-i]}\be\tim\he(\C^{2}\be)^{[n-i+j]}\be\tim \C^{2}\be\tim \C^{2}\be$ which maps $A$ and $B$ to the classical cycles $p^{-1}_{124}Q^{[n,n-i]}\be$ and $p^{-1}_{235}Q^{[n-i+j,n-i]}\be$. In the integrable case, we know that in the open set $\{s\not = t\}$, $p^{-1}_{124}Q^{[n,n-i]}\be$ and $p^{-1}_{235}Q^{[n-i+j,n-i]}\be$ intersect generically  transversally. Using relative holomorphic coordinates as above, this property still holds in our context. If $\bigl( A\cap B\bigr)_{s\not = t}\he=C_{ij}\he$, we can write $[A]\cdot[B]=
\bigl[\,  \ba{C{\vphantom{{'}}}_{ij}}\, \bigr]+\iota _{*}\he R$ where $\xymatrix{\iota :Y_{\{s= t\}}\he\ar@{^{(}->}[r]&Y
}$ is the natural injection and $R\in H_{2(2n-i+j+2)}\he\bigl( Y_{\{s= t\}}\he,\Q\bigr)$. We can do the same in
\[
Y'=\Bigl( \xnb{[n]\tim[n+j]\tim[n-i+j]\tim [1]\tim [1]},J^{\re }_{n\tim(n+j)\tim(n-i+j)\tim 2}\leftarrow\textrm{5 times}\Bigr)
\]
with the cycles $A'$ and $B'$ defined by
\begin{align*}
 A'&=\Bigl\{(\xi ,\xi ',\xi'',s,t)\ \textrm{such that}\ \xi \suq\xi ',\ \rho (\xi ,\xi' )=s\Bigr\}\\B'&=\Bigl\{(\xi ,\xi ',\xi'',s,t)\ \textrm{such that}\ \xi'' \suq\xi ',\ \rho (\xi '' ,\xi' )=t\Bigr\}\cdot
\end{align*}
We put $D_{ij}\he=\bigl( A'\cap B'\bigr)_{s\not = t}\he$. Then
$[A']\cdot[B']=
\bigl[\,  \ba{D{\vphantom{{'}}}_{ij}}\, \bigr]+\iota '_{*}R'$, where $\xymatrix{\iota ':Y'_{\{s= t\}}\ar@{^{(}->}[r]&Y'
}$ is the injection and $R'\in H_{2(2n-i+j+2)}\he\bigl( Y'_{\{s= t\}},\Q\bigr)$.
The class $R$ (resp. $R'$) can be chosen supported in $A\cap B\cap Y_{\{s=t\}}\he$
(resp. in $A'\cap B'\cap Y'_{\{s=t\}}$).
\par\medskip
The following lemma describes the situation outside the diagonal $\{s=t\}$.
\begin{lemma}\label{LemUn}
 $p_{1345*}\he\Bigl( \bigl[\,  \ba{C{\vphantom{{'}}}_{ij}}\, \bigr]\cap\bigl(\pr_{5}\ee\beta \cup \pr_{4}\ee \alpha \bigr)\Bigr)=(-1)^{|\alpha |\, |\beta |}\be p_{1345*}\he
\Bigl( \bigl[\,  \ba{D{\vphantom{{'}}}_{ij}}\, \bigr]\cap\bigl(\pr_{5}\ee\alpha \cup \pr_{4}\ee \beta \bigr)\Bigr)$.
\end{lemma}

\begin{proof}
 Let us introduce the incidence varieties
\begin{align*}
 T&=\Bigl\{(\xb,\yb,\zb,s,t)\in\snx\tim S^{n-i}\be X\tim S^{n-i+j}\be X\tim X\tim X\ \textrm{such that}\ \xb=\yb+ is,\ \zb=\yb+ jt\Bigr\}\\
T'&=\Bigl\{(\xb,\yb,\zb,s,t)\in\snx\tim S^{n+j}\be X\tim S^{n-i+j}\be X\tim X\tim X\ \textrm{such that}\ \yb=\xb+js=\zb+ it\Bigr\}
\end{align*}
Let $\Omega$, $\Omega '$ be two small \nbh s of $T$ and $T'$ and $W$ a \nbh\ of $Z_{n\tim(n-i+j)\tim 2}\he$ such that if $(\xb,\yb,\zb,s,t)\in \Omega $ (resp. $\Omega '$),
$\yb\in W_{\xb,\zb,s,t}\he$.
Let $J^{\re }_{n\tim(n-i+j)\tim 2}$ be a relative integrable complex structure on $W$.
After shrinking $\Omega $ and $\Omega '$ if necessary, we can consider
two relative structures
$J^{\re }_{n\tim(n-i)\tim(n-i+j)\tim 2}$ and $J^{\re }_{n\tim(n+j)\tim(n-i+j)\tim 2}$ such that
\[
\begin{cases}
 \forall (\xb,\yb,\zb,s,t)\in\Omega,\quad
J^{\re }_{n\tim(n-i)\tim(n-i+j)\tim 2,\xb,\yb,\zb,s,t}=J^{\re }_{n\tim(n-i+j)\tim 2,\xb,\zb,s,t}\\
\forall (\xb,\yb,\zb,s,t)\in\Omega',\quad
J^{\re }_{n\tim(n+j)\tim(n-i+j)\tim 2,\xb,\yb,\zb,s,t}=J^{\re }_{n\tim(n-i+j)\tim 2,\xb,\zb,s,t}
\end{cases}
\]
Let $U$ (resp. $U'$) be the points of $Y$ (resp. $Y'$) lying over $\Omega $ (resp. $\Omega '$). We define two maps $u$ and $v$ as follows:
\[
\xymatrix@R=3pt{
u:U\ar[r]&\bigl( \xnb{[n]\tim[n-i+j]\tim [1]\tim [1]},J^{\re }_{n\tim(n-i+j)\tim 2}\leftarrow \textrm{4 times}\bigr),\quad ( \xi ,\xi ',\xi '',s,t)\ar@{|->}[r]&(\xi ,\xi '',s,t),\\
v:U'\ar[r]&\bigl( \xnb{[n]\tim[n-i+j]\tim [1]\tim [1]},J^{\re }_{n\tim(n-i+j)\tim 2}\leftarrow \textrm{4 times}\bigr),\quad ( \xi ,\xi ',\xi '',s,t)\ar@{|->}[r]&(\xi ,\xi '',s,t)
}
\]
If we take
homeomorphisms between $\xn\!\tim\xna{n-i}\!\tim\xna{n-i+j}\!\tim X^{2}$, $\xn\!\tim\xna{n+j}\!\tim\xna{n-i+j}\!\tim X^{2}$, $\xn\!\tim\xna{n-i+j}\!\tim X^{2}$ and
$\xnb{[n]\tim[n-i]\tim[n-i+j]\tim[1]\tim[1]}$, $\xnb{[n]\tim[n+j]\tim[n-i+j]\tim[1]\tim[1]}$, $\xnb{[n]\tim[n-i+j]\tim[1]\tim[1]}$, $u$ and $v$ can be extended to global maps which are in the homotopy class of $p_{1345}\he$.
As in the integrable case, there is an isomorphism
$\xymatrix{\phi :C_{ij}\he\ar[r]^{\simeq}&D_{ij}\he}$ given as follows: if
$(\xi ,\xi ',\xi '',s,t)\in C_{ij}\he$ with $HC(\xi ')=\yb$, $HC(\xi )=\yb+is$, $HC(\xi '')=\yb+ jt$, then
$\phi (\xi ,\xi',\xi '',s,t )=(\xi ,\ti{\xi },\xi '',t,s)$ where $\ti{\xi }$ is defined by
$\ti{\xi }_{\, \vert\, p}\he={\xi '}_{ \vert\, p}\he$ if $p\in \yb$, $p\not \in\{s,t\} $, $\ti{\xi }_{\, \vert\, s}\he={\xi }_{\, \vert\, s}\he$
and $\ti{\xi }_{\, \vert\, t}\he={\xi ''}_{\, \vert\, t}\he$. All these schemes are considered for the structure $J^{\re }_{n\tim(n-i+j)\tim 2,\xb,\zb,s\, \cup \, t}$. Let $\pa C_{ij}\he=\ba{C_{ij}}\, \backslash C_{ij}\he$, $\pa D_{ij}\he=\ba{D_{ij}}\, \backslash D_{ij}\he$ and $S=u(\pa C_{ij}\he)=v(\pa D_{ij}\he)$.
We define $\apl{\pi }{Y'}{Y'}$ by $\pi (\xi ,\xi',\xi '',s,t )=(\xi ,\xi',\xi '',t,s )$.
We have the following diagram, where all the maps are proper:
\[
\xymatrix@C=3pt@R=30pt{Y\backslash\pa C_{ij}\he\supseteq C_{ij}\he\ar@{-}[rr]^{\phi }_{\simeq}\ar[dr]_{u}&&D_{ij}\he\suq Y'\backslash \pa D_{ij}\he\ar[dl]^{v\circ\pi} \\
&\xnb{[n]\tim[n-i+j]\tim [1]\tim [1]}\backslash S}.
\]
Thus we obtain in the Borel-Moore homology group $H^{\textrm{lf}}_{2(2n-i+j+2)}\bigl(\xnb{[n]\tim[n-i+j]\tim [1]\tim [1]} \backslash S,\Q\bigr)$ the equality
\[
u_{*}\he\bigl( [C_{ij}\he]\cap (\pr_{5}\ee\beta \cup \pr_{4}\ee\alpha )\bigr)=v_{*}\he\bigl( [D_{ij}\he]\cap (\pr_{4}\ee\beta \cup \pr_{5}\ee\alpha )\bigr).
\]
Since $\dim S\le 2(2n-i+j+2)-2$,
we get
 \[p_{1345*}\he\Bigl( \bigl[\,  \ba{C{\vphantom{{'}}}_{ij}}\, \bigr]\cap\bigl(\pr_{5}\ee\beta \cup \pr_{4}\ee \alpha \bigr)\Bigr)=(-1)^{|\alpha |\, |\beta |}\be p_{1345*}\he
\Bigl( \bigl[\,  \ba{D{\vphantom{{'}}}_{ij}}\, \bigr]\cap\bigl(\pr_{5}\ee\alpha \cup \pr_{4}\ee \beta \bigr)\Bigr).\]
\end{proof}
By this lemma, in $\bigl[\mathfrak{q}_{-i}\he(\alpha ),\mathfrak{q}_{j}\he(\beta )\bigr]$, the terms coming from
$\ba{C{\vphantom{{'}}}_{ij}}$ and $\ba{D{\vphantom{{'}}}_{ij}}$ cancel out. It remains to deal with the excess intersection components along the diagonals $Y_{\{s=t\}}\he$ and $Y'_{\{s=t\}}$. We introduce the locus
\begin{align*}
 \Gamma =\Bigl\{\bigl( \xi ,\, \xb,\, &\xi '',\zb,\, s,\, t\bigr)\in \xnb{[n]\tim[n-i+j]\tim [1]\tim [1]}\ \textrm{such that}\ s=t,\ \xi _{\, \vert\, p}\he=\xi ''_{\, \vert\, p}\ \textrm{for}\ p\not =s\\ &\textrm{and}\ HC(\xi '')=HC(\xi )+(j-i)s\ \textrm{if}\ j\ge i,\
HC(\xi )=HC(\xi '')+(i-j)s\ \textrm{if}\ j\le i\Bigr\}\cdot
\end{align*}
$\Gamma $ contains $u(A\cap B)$ and $v(A'\cap B')$. As before, the dimension count can be done as in the integrable case: $\dim\Gamma <2(2n-i+j+2)$ if $i\not = j$ and if $i=j$, $\Gamma $ contains a $2(2n+2)$-dimensional component, namely $\Delta _{\xn}\times\Delta _{X}\he$. All other components have lower dimensions. Thus, if $i\not = j  $, $p_{1345*}\he(\iota _{*}\he R)=0$ and $p_{1345*}\he(\iota '_{*} R')=0$ since they are supported in $\Gamma $ and have degree $2(2n-i+j+2)$. If $i=j$, then $p_{1345*}\he(\iota _{*}\he R)$ and $p_{1345*\he}(\iota '_{*} R')$ are proportional to the fundamental class of $\Delta _{\xn}\he\times\Delta _{X}\he$.
Now $p_{45*}\he\bigl( \bigl[ \Delta _{\xn}\tim\Delta _{X}\he\bigr]\cap \bigl( \pr_{4}\ee\alpha \cup \pr_{5}\ee\beta \bigr)\bigr)=\ds\int_{X}\alpha \beta .\bigl[ \Delta _{\xn}\bigr]$ and we obtain $\bigl[\mathfrak{q}_{-i}\he(\alpha ),\mathfrak{q}_{i}\he(\beta )\bigr]=\mu \ds\int_{X}\alpha \beta .\id$ where $\mu  \in\Q$.
The computation of the multiplicity $\mu $ is a local problem on $X$ which is solved in
\cite{SchHilGr}, \cite{SchHilStr}. It turns out that $\mu =-i$.
\end{proof}
\begin{remark}
The proof remains quite similar for $i>0$, $j>0$.
There is no excess term in this
\par\smallskip \noindent
case. Indeed, $Y=\xnb{[n]\tim[n+i]\tim[n+i+j]\tim [1]\tim [1]}$, $\Gamma =\xnb{[n+i+j,n]}\suq \xnb{[n]\tim[n+i]\tim[n+i+j]\tim [1]\tim [1]}$ and $\dim \Gamma =2(2n+i+j+1)<2(2n+i+j+2)$.
\end{remark}
Theorem \ref{PropUnSectionDeuxSubsectionDeux} gives a representation in $\HHH:=\bigoplus\limits_{n\in \N}\he H^{*}\be(\xn,\Q)$ of the Heisenberg super-algebra $\mathcal{H}(H\ee\be(X,\Q))$.
\begin{proposition}\label{PropUnSectionZorro}
 $\HHH$ is an irreducible $\hh(\hb (X,\Q))$-module generated by the vector 1.
\end{proposition}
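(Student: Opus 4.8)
The statement to prove is that $\HHH=\bigoplus_{n\in\N}H^{*}(\xn,\Q)$ is an irreducible module over the Heisenberg super-algebra $\hh(\hb(X,\Q))$, generated by the vacuum vector $1\in H^{0}(X^{[0]},\Q)=\Q$. The structure of the argument is the standard representation-theoretic one for lowest-weight modules of Heisenberg algebras, combined with a dimension count using \got's formula (Theorem \ref{ThDeuxSectionUnSubsectionDeux}) to show that the submodule generated by $1$ is in fact everything. First I would set up notation: fix a homogeneous basis $\gamma_{0}=1,\gamma_{1},\dots,\gamma_{b}$ of $\hb(X,\Q)$, and consider the creation operators $\mathfrak{q}_{-k}(\gamma_{\ell})$ with $k\ge 1$. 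By Remark \ref{RemUnSectionDeuxSubsectionDeux}, $\mathfrak{q}_{-k}(\alpha)$ raises $n$ by $k$, and the commutation relations of Theorem \ref{PropUnSectionDeuxSubsectionDeux} say the $\mathfrak{q}_{k}(\alpha)$ with $k\ge 1$ annihilate $1$ (since $\mathfrak{q}_{k}(\alpha)$ lowers $n$ and $H^{*}(X^{[m]})=0$ for $m<0$), so $1$ is a vacuum vector. One then forms the Fock-space vectors
\[
\mathfrak{q}_{-k_{1}}(\gamma_{\ell_{1}})\cdots\mathfrak{q}_{-k_{r}}(\gamma_{\ell_{r}})\, 1,
\qquad k_{1}\ge\cdots\ge k_{r}\ge 1,
\]
which span the submodule $\hh\cdot 1$; these are the images of monomials in the creation operators, and modulo reordering (using the super-commutation relations, which only introduce lower-order correction terms supported on smaller $n$ or scalars) they span $\hh\cdot 1$.

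\textbf{Counting.} The key step is a dimension comparison. On the one hand, by the theory of the Heisenberg (super-)algebra, the Fock module $F$ built freely on the creation operators $\mathfrak{q}_{-k}(\gamma_{\ell})$ — with the $\gamma_{\ell}$ of even degree contributing polynomial (bosonic) generators and those of odd degree contributing exterior (fermionic) generators, one for each $k\ge 1$ — has a graded dimension, in the variable $q$ tracking $n$ and $t$ tracking cohomological degree, given exactly by the product
\[
\prod_{m=1}^{\infty}\dfrac
{\bigl[(1+t^{2m-1}q^{m})(1+t^{2m+1}q^{m})\bigr]^{b_{1}(X)}}
{(1-t^{2m-2}q^{m})(1-t^{2m+2}q^{m})(1-t^{2m}q^{m})^{b_{2}(X)}}\cdot
\]
Here the factors $(1-t^{2m-2}q^{m})^{-1}$, $(1-t^{2m+2}q^{m})^{-1}$, $(1-t^{2m}q^{m})^{-b_{2}(X)}$ come from the bosonic generators $\mathfrak{q}_{-m}(\alpha)$ with $\alpha$ ranging over a basis of $H^{0}\oplus H^{4}\oplus H^{2}$ (the degree shift $|\alpha|+2m-2$ from Remark \ref{RemUnSectionDeuxSubsectionDeux} accounts for the exponents of $t$), and the factors $(1+t^{2m-1}q^{m})(1+t^{2m+1}q^{m})$ raised to $b_{1}(X)$ come from the fermionic generators $\mathfrak{q}_{-m}(\alpha)$ with $\alpha\in H^{1}\oplus H^{3}$. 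On the other hand, by Theorem \ref{ThDeuxSectionUnSubsectionDeux} (\got's formula) this is precisely $\sum_{n,i}b_{i}(\xn)t^{i}q^{n}$, i.e.\ the graded dimension of $\HHH$ itself.

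\textbf{Conclusion and obstacle.} The surjection $F\twoheadrightarrow \hh\cdot 1\suq\HHH$ (sending a monomial in creation operators to its action on $1$) is graded; since $F$ and $\HHH$ have the same finite graded dimension in each bidegree $(n,i)$, and $\hh\cdot 1$ is sandwiched between them, all three coincide. Hence $\hh\cdot 1=\HHH$, proving that $\HHH$ is generated by $1$; and since the map $F\to\HHH$ is then an isomorphism of graded vector spaces, it is an isomorphism of $\hh$-modules, so $\HHH$ is the standard Fock module, which is irreducible — explicitly, any nonzero submodule contains a vector of minimal $n$, which is killed by all annihilation operators $\mathfrak{q}_{k}(\alpha)$, $k\ge1$, hence (by the commutation relations giving non-degenerate pairing $\int_{X}\alpha\beta$ on $\hb(X,\Q)$, Poincar\'e duality) must lie in the lowest-weight line and generate everything. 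The main obstacle is the bookkeeping in the dimension count: one must verify carefully that the graded dimension of the free Fock module matches \got's product term by term, i.e.\ match the degree shifts from Remark \ref{RemUnSectionDeuxSubsectionDeux} against the exponents of $t$ in the product, keeping track of the split of $\hb(X,\Q)$ by degree and the bosonic/fermionic dichotomy; once this is set up, irreducibility is formal from non-degeneracy of the Poincar\'e pairing.
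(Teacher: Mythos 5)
Your proposal is exactly the argument the paper intends: the paper's own ``proof'' is a one-line citation of Nakajima, whose argument is precisely the Fock-space comparison against \got's formula that you carry out. Two remarks. First, a cosmetic one: you have reversed the paper's sign convention --- by Definition \ref{DefUnSectionDeuxSubsectionDeux} and Remark \ref{RemUnSectionDeuxSubsectionDeux} it is $\mathfrak{q}_{j}(\alpha)$ with $j\ge 1$ that raises $n$ by $j$ (creation), and $\mathfrak{q}_{-j}(\alpha)$ that lowers it. Second, and more substantively, the ``sandwich'' step as written is not a valid deduction: from a surjection $F\twoheadrightarrow\hh\cdot 1$ and an inclusion $\hh\cdot 1\subseteq\HHH$ with $\dim F_{n,i}=\dim\HHH_{n,i}$ one only gets $\dim(\hh\cdot 1)_{n,i}\le\dim\HHH_{n,i}$; the three spaces coincide only once you know the map $F\to\HHH$ is injective, and you cannot infer that injectivity \emph{from} the sandwich, as your ``is then an isomorphism'' suggests. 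The injectivity is exactly the irreducibility of the abstract Fock module $F$, which you do establish at the end via the commutation relations of Theorem \ref{PropUnSectionDeuxSubsectionDeux} and the nondegeneracy of $\int_X\alpha\beta$ (Poincar\'e duality on the compact fourfold $X$): this forces the Gram matrix of the creation monomials applied to $1$ to be nondegenerate, hence their linear independence in $\HHH$. So the pairing/irreducibility step must come \emph{before} the dimension count, not after it; with that reordering your proof is complete and is the one the paper refers to.
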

This a consequence of Theorem \ref{PropUnSectionDeuxSubsectionDeux} and \got's formula (Theorem \ref{ThDeuxSectionUnSubsectionDeux}), as shown by Nakajima
\cite{SchHilNa}.


\section{Tautological bundles}\label{SecTrois}
\subsection{Construction of the tautological bundles}\label{SecTroisUn} Our aim in this section is to associate to any complex vector bundle $E$ on an almost-complex compact fourfold $X$ a collection of complex vector bundles
$ E^{[n]}\be$ on $\xn$ which generalize the tautological bundles already known in the algebraic context. The vector bundles $ E^{[n]}\be$ are constructed
using an auxiliary relative holomorphic structure on $E$. However, the classes $ E^{[n]}\be$ in $K(\xn)$ are canonical. Finally, we compare the classes $E^{[n]}\be$ and $ E^{[n+1]}\be$ in $K(\xn)$ and $K(X^{[n+1]}\be)$ through the incidence variety $X^{[n+1,n]}\be$.
\par\medskip
In the classical case, let $E$ be a algebraic vector bundle on an algebraic surface $X$. For $n\in\N$,
let $\apl{p}{\xn\tim X}{\xn}$ and $\apl{q}{\xn\tim X}{X}$ be the two projections and
let $Y_{n}\he\suq\xn\tim X$ be the incidence locus. Then
$\apl{p_{\, \vert\, Y_{n}\he}}{Y_{n}\he}{\xn}$ is finite. The tautological vector bundle $E^{[n]}\be$ is defined by $E^{[n]}\be=p_{*}\he\bigl( q\ee\be E\oti\oo_{Y_{n}\he}\bigr)$ and satisfies: for all $\xi $ in $\xn$,
$E_{\, \vert\, \xi }^{[n]}=H^{0}\be\bigl(\xi ,i_{\xi }\ee E\bigr)$. Our first aim is to generalize this construction in the almost-complex case.
\par\medskip
Let $(X,J)$ be an almost-complex compact fourfold, $Z_{n}\he
\suq \snx\times X$ the incidence locus, $W$ a small \nbh\ of $Z_{n}\he$ and $J_{n}^{\,  \rel}$ a relative integrable structure on $W$. The fibers of $\apl{\pr_{1}\he}{W}{\snx}$ are smooth analytic sets. We endow $W$ with the sheaf $\aaa_{W}\he$ of continuous functions which are smooth on the fibers of $\pr_{1}\he$. We can consider the sheaf $\aaa_{W,\rel}^{0,1}$ of relative $(0,1)$-forms on $W$.
There exists a relative
\mbox{$\ba{\pa\he}$-operator} $\apl{\ba{\pa\he}^{\, \rel}\be}{\aaa_{W}\he}{\aaa_{W,\rel}^{\, 0,1}}$ which induces for each $\xb\in\snx$ the usual operator $\apl{\ba{\pa\he}}{\aaa_{W_{\xb}\he}\he}{\aaa_{W_{\xb}\he}^{\, 0,1}}$ given by the complex structure $J_{n,\xb}^{\, \rel}$ on $W_{\xb}\he$.
\begin{definition}
Let $E$ be a complex vector bundle on $X$.
\begin{enumerate}
 \item [(i)] A \emph{relative connection $\ba{\pa\he}_{E}^{\, \rel}$ on $E$ compatible with $J_{n}^{\, \rel}$} is a $\C$-linear morphism of sheaves $\apl{\ba{\pa\he}_{E}\he}{\aaa_{W}\he\bigl( \pr_{2}\ee E\bigr)}{\aaa_{W}^{\, 0,1} \bigl( \pr_{2}\ee E\bigr)}$ satisfying
$\ba{\pa\he}_{E}^{\, \rel}(\varphi s)=\varphi\,  \ba{\pa\he}_{E}^{\, \rel} s+\ba{\pa\he}\be^{\, \rel}\varphi \oti s$ for all sections $\varphi $ of $\aaa_{W}\he$ and $s$ of  $\aaa_{W}\he\bigl( \pr_{2}\ee E\bigr)$.
\item[(ii)] A relative connection $\ba{\pa\he}_{E}^{\, \rel}$ is \emph{integrable} if
$\bigl( \ba{\pa\he}_{E}^{\, \rel}\bigr)^{2}\be=0$.
\end{enumerate}
\end{definition}
If $\ba{\pa\he}_{E}^{\, \rel}$ is an integrable  connection on $E$ compatible with
$J_{n}^{\, \rel}$, we can apply the Kozsul-Malgrange integrability theorem with continuous parameters in $\snx$ (see \cite{SchHilVo3}). Thus, for every $\xb\in\snx$, $E_{\vert W_{\xb}\he}$ is endowed with the structure of a holomorphic vector bundle over $\bigl( W_{\xb}\he,J_{n,\xb}^{\, \rel}\bigr)$ and this structure varies continuously with $\xb$. Furthermore, $\ker\ba{\pa\he}_{E}^{\, \rel}$ is the sheaf of relative holomorphic sections of $E$. Therefore, there is no difference between relative integrable connections on $E$ compatible with $J_{n}^{\, \rel}$ and relative holomorphic structures on $E$ compatible with $J_{n}^{\, \rel}$.
\par\medskip
Taking relative holomorphic coordinates for $J_{n}^{\, \rel}$, we can see that relative integrable connections exist on $W$ over small open sets of $\snx$. By a partition of unity on $\snx$, it is possible to build global ones. The space of holomorphic structures on a complex vector bundle over a ball in
$\C^{2}\be$ is contractible. Therefore the space of relative holomorphic structures on $E$ compatible with $J_{n}^{\, \rel}$ is also contractible.
\par\medskip
We proceed now to the construction of the tautological bundle $E^{[n]}\be$ on $\xnj{n}$. Let $\ba{\pa\he}_{E}^{\, \rel}$ be a relative holomorphic structure on $E$ adapted to $J_{n}^{\, \rel}$. Taking relative holomorphic coordinates, we get a vector bundle $E^{[n]}_{\, \rel}$ over $W^{[n]}_{\rel}$ satisfying: for each $\xb$ in $\snx$, $E^{[n]}_{\, \rel\vert W_{\xb}^{[n]}}=E^{[n]}_{\vert W_{\xb}\he}$, where $E_{\vert W_{\xb}\he}\he$ is endowed with the holomorphic structure given by $\ba{\pa\he}_{E,\xb}^{\, \rel}$.
\begin{definition}\label{DefDeux}
Let $\apl{i}{\xnj{n}}{W_{\re }^{\, [n]}}$ be the canonical injection. The complex vector bundle $\bigl( \enn,J^{\re }_{n},\ba{\pa\he}_{E}^{\, \rel}\bigr)$ on $\xnj{n}$ is defined by $E^{\, [n]}\be=i\ee\be E^{\, [n]}_{\re }$.
\end{definition}
In the sequel, we consider the class of $\enn$ in $K\bigl( \xn\bigr)$, which we prove below to be independent of the structures used in the construction.
\begin{proposition}\label{prop}
 The class of $\enn$ in $K\bigl( \xn\bigr)$ is independent of  $\bigl( J^{\re }_{n},\ba{\pa\he}_{E}^{\, \rel}\bigr)$.
\end{proposition}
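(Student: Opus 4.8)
The plan is to join any two admissible data $\bigl(J^{\re}_{n,0},\ba{\pa\he}^{\,\rel}_{E,0}\bigr)$ and $\bigl(J^{\re}_{n,1},\ba{\pa\he}^{\,\rel}_{E,1}\bigr)$ by a smooth one‑parameter family and then conclude by a homotopy argument, in the spirit of Proposition~\ref{PropDeuxSectionUnsubsectionDeux}. Concretely, the relative tautological bundle attached to such a family will be exhibited as a genuine continuous complex vector bundle on the total space of the associated relative Hilbert scheme; since that relative Hilbert scheme is topologically trivial over the parameter interval and $\xnj{n,0}$ is compact, its restrictions to the two endpoints are isomorphic, which is exactly the asserted independence, read through the canonical isomorphism $K\bigl(\xnj{n,0}\bigr)\simeq K\bigl(\xnj{n,1}\bigr)$ of Proposition~\ref{PropDeuxSectionUnsubsectionDeux}.

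First I would construct the family. As $\mathcal{B}$ is connected, pick a smooth path $\bigl(J^{\re}_{n,t}\bigr)_{t\in[0,1]}$ in $\mathcal{B}$ from $J^{\re}_{n,0}$ to $J^{\re}_{n,1}$. Along it one can select, smoothly in $t$, a relative holomorphic structure $\ba{\pa\he}^{\,\rel}_{E,t}$ on $E$ compatible with $J^{\re}_{n,t}$ and equal to $\ba{\pa\he}^{\,\rel}_{E,0}$ at $t=0$: locally on $\snx$ and on a small $t$‑interval such structures form a non‑empty convex set, so a global choice is produced by the partition‑of‑unity gluing over $\snx\times[0,1]$ already used in Section~\ref{SecTroisUn} and in the proof of Proposition~\ref{PropUnSectionUnsubsectionDeux}. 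Then, over the constant structure $J^{\re}_{n,1}$, and using that the space of relative holomorphic structures on $E$ compatible with the fixed $J^{\re}_{n,1}$ is contractible, concatenate with a path of such structures joining the endpoint of the previous one to $\ba{\pa\he}^{\,\rel}_{E,1}$. This gives a smooth family of pairs joining the two prescribed data.

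Applying the construction of Section~\ref{SecTroisUn} to this family gives a bundle $E^{[n]}_{\rel}$ on the relative Hilbert scheme $\bigl(\xn,\{J^{\re}_{n,t}\}_{t\in[0,1]}\bigr)$ whose restriction to the slice $t$ is the bundle $\bigl(\enn,J^{\re}_{n,t},\ba{\pa\he}^{\,\rel}_{E,t}\bigr)$ of Definition~\ref{DefDeux}. The crucial point is that $E^{[n]}_{\rel}$ is a genuine continuous complex vector bundle on the total space: in the charts of the proof of Proposition~\ref{PropUnSectionUnSubsectionUn} the relative Hilbert scheme is identified, over an open subset of $\snx\times[0,1]$, with the product of $(\C^{2})^{[n]}$ with an interval, and under this identification $E^{[n]}_{\rel}$ is the pullback of the classical tautological bundle attached to the holomorphic structure obtained by transporting $\ba{\pa\he}^{\,\rel}_{E,t}$ through the relative holomorphic coordinates for $J^{\re}_{n,t}$; joint continuity in $(\xb,t)$ follows from the Koszul--Malgrange theorem with continuous parameters already invoked in Section~\ref{SecTroisUn} (see \cite{SchHilVo3}). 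By Propositions~\ref{PropUnSectionUnsubsectionDeux} and \ref{PropDeuxSectionUnsubsectionDeux} the relative Hilbert scheme over $[0,1]$ is topologically trivial; fixing a trivialising homeomorphism $\Phi\colon \xnj{n,0}\times[0,1]\to\bigl(\xn,\{J^{\re}_{n,t}\}_{t\in[0,1]}\bigr)$ equal to the identity at $t=0$, the pullback $\Phi^{*}E^{[n]}_{\rel}$ is a continuous complex vector bundle on $\xnj{n,0}\times[0,1]$. Since $\xnj{n,0}$ is compact, the restrictions of $\Phi^{*}E^{[n]}_{\rel}$ to $t=0$ and $t=1$ are isomorphic, and unwinding $\Phi$ this says precisely that the canonical isomorphism $K\bigl(\xnj{n,0}\bigr)\simeq K\bigl(\xnj{n,1}\bigr)$ carries $\bigl[\bigl(\enn,J^{\re}_{n,0},\ba{\pa\he}^{\,\rel}_{E,0}\bigr)\bigr]$ onto $\bigl[\bigl(\enn,J^{\re}_{n,1},\ba{\pa\he}^{\,\rel}_{E,1}\bigr)\bigr]$, which is the claim.

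The step I expect to be the main obstacle is the verification in the last paragraph that $E^{[n]}_{\rel}$ is continuous on the total space and pulls back along $\Phi$ to an honest vector bundle, i.e. the compatibility between Voisin's local charts, the relative holomorphic coordinates for the varying $J^{\re}_{n,t}$, and the variable holomorphic structure $\ba{\pa\he}^{\,\rel}_{E,t}$ on $E$. Once this ``continuity with parameters'' is in place the remainder is formal: constructing the continuous family $\ba{\pa\he}^{\,\rel}_{E,t}$ is the same routine partition‑of‑unity argument used repeatedly above, and the topological triviality of the relative Hilbert scheme over the interval has already been recorded in Proposition~\ref{PropDeuxSectionUnsubsectionDeux}.
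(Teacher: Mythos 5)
Your proposal is correct and follows essentially the same route as the paper: join the two pairs by a smooth path, build the relative tautological bundle over the relative Hilbert scheme over $[0,1]$, and conclude from the topological triviality of that family (Proposition~\ref{PropDeuxSectionUnsubsectionDeux}) together with homotopy invariance of $K$-theory, $K(\xg_{0}\tim[0,1])\simeq K(\xg_{0})$. Your extra care in constructing the path of relative holomorphic structures and in checking joint continuity of the bundle in $(\xb,t)$ merely fills in steps the paper asserts without detail.
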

\begin{proof}
 Let $\bigl( J_{0,n}^{\re },\ba{\pa\he}_{E, 0}^{\, \rel}\bigr)$ and $\bigl( J_{1,n}^{\re },\ba{\pa\he}_{E, 1}^{\, \rel}\bigr)$ be two relative holomorphic structures on $E$,
$\bigl( J_{t,n}^{\re },\ba{\pa\he}_{E,t}^{\, \rel}\bigr)$ be a smooth path between them, and
$W_{\re }^{\, [n]}$ be the relative Hilbert scheme over $\snx\tim [0,1]$ for the family $\bigl( J_{t,n}^{\re }\bigr)_{0\le t\le 1}\he$. There exists a vector bundle
$\bigl( \ti{E}_{\re }^{\, [n]},\bigl\{J^{\re }_{t,n}\bigr\}_{0\le t\le 1}\he,
\bigl\{\ba{\pa\he}_{E, t}^{\, \rel}\bigr\}_{0\le t\le 1}\he\bigr)$ over $W_{\re }^{\, [n]}$ such that for all $t$ in
$[0,1]$, $\ti{E}^{\, [n]}_{\rel\vert\, W_{\rel,t}^{\, [n]}}=\bigl( {E}_{\re }^{\, [n]},J^{\re }_{t,n},
\ba{\pa\he}_{E, t}^{\, \rel}\bigr)$. If $\xg  =\bigl( \xn,\bigl\{J_{t,n}^{\re }\bigr\}_{0\le t\le 1}\he\bigr)\suq W_{\re }^{\, [n]}$ is the relative Hilbert scheme over $[0,1]$, then $\ti{E}^{\, [n]}_{\re \vert\, \xg  }$ is a complex vector bundle on $\xg  $ whose restriction to $\xg  _{t}\he$ is $\bigl( E^{\, [n]}\be, J^{\re }_{t,n}, \ba{\pa\he}_{E, t}^{\, \rel}\bigr)$. Now $\xg  $ is topologically trivial over $[0,1]$ by Proposition \ref{PropDeuxSectionUnsubsectionDeux}. Since $K(\xg  _{0}\he\tim[0,1])\simeq K(\xg  _{0}\he)$, we get the result.
\end{proof}
If $\T=X\tim\C$ is the trivial complex line bundle on $X$, the tautological bundles
$\T^{[n]}\be$ already convey geometric informations on $\xn$. Let $\pa \xn\suq\xn$ be the inverse image of the big diagonal of $\snx$ by the Hilbert-Chow morphism. We have $\dim\pa \xn=4n-2$ and
$H_{4n-2}\he(\pa\xn,\Z)\simeq\Z$ (this can be proved as in Lemma \ref{LemUnSectionDeuxSubsectionUn}).
\begin{lemma}\label{lem}
 $c_{1}\he(\T^{[n]}\be)=-\frac{1}{2}\, PD^{-1}\be\bigl( \bigl[ \pa\xn\bigr]\bigr)$ in
$H^{2}\be\bigl( \xn,\Q\bigr)$.
\end{lemma}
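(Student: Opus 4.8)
The plan is to show first that $c_1(\T^{[n]})$ restricts to $0$ on the open set $\xn\setminus\pa\xn$, then to deduce that it is a rational multiple of $PD^{-1}\bigl([\pa\xn]\bigr)$, and finally to pin down the multiple by a single local computation. (For $n=1$ both sides vanish, so assume $n\geq 2$.) On $U:=\xn\setminus\pa\xn$, where $HC$ is a local homeomorphism onto the configuration space, the universal subscheme is an honest $n$-sheeted topological covering $\pi\colon Y_U\to U$. Since the tautological bundle of the trivial bundle $E=\T$ over a \emph{reduced} subscheme is canonically the space of functions on its support --- in particular independent of the relative holomorphic structure entering Definition \ref{DefDeux} --- one has $\T^{[n]}_{|U}\simeq\pi_{*}\,\underline{\C}_{Y_U}$, the complex vector bundle underlying a local system. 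Such a bundle is flat, hence has vanishing rational Chern classes; in particular $c_1(\T^{[n]})_{|U}=0$.

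From the long exact sequence of the pair $(\xn,U)$, the kernel of the restriction $H^{2}(\xn,\Q)\to H^{2}(U,\Q)$ equals the image of $H^{2}_{\pa\xn}(\xn,\Q)$ in $H^{2}(\xn,\Q)$. Now $\xn$ is a compact oriented topological $4n$-manifold (Proposition \ref{PropUnSectionUnSubsectionUn} and Theorem \ref{MainVoisin}(iii)) and $\pa\xn$ is compact with $H_{4n-2}(\pa\xn,\Q)\simeq\Q$ (noted before the statement, and proved as in Lemma \ref{LemUnSectionDeuxSubsectionUn}), so Poincar\'e--Lefschetz duality gives $H^{2}_{\pa\xn}(\xn,\Q)\simeq H_{4n-2}(\pa\xn,\Q)\simeq\Q$, whose image in $H^{2}(\xn,\Q)$ is the line $\Q\cdot PD^{-1}\bigl([\pa\xn]\bigr)$. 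Combined with the first step, $c_1(\T^{[n]})=\lambda\, PD^{-1}\bigl([\pa\xn]\bigr)$ for a unique $\lambda\in\Q$.

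To compute $\lambda$ I would pair both sides with the $2$-cycle $\gamma_0$ given by a Hilbert--Chow fibre $\mathbb{P}^{1}$ over a generic point $2y+p_3+\cdots+p_n$ of the big diagonal of $\snx$; it suffices to do so because $[\pa\xn]\cdot\gamma_0\neq 0$. By Proposition \ref{PropUnSectionUnSubsectionUn} a neighbourhood of that point in $\xnj{n}$ is homeomorphic, compatibly with $HC$, to $(\C^{2})^{[n]}$ over a neighbourhood of the corresponding cycle in $S^{n}\C^{2}$; near that cycle the picture splits off a trivial polydisc factor, and in the remaining $(\C^{2})^{[2]}\to S^{2}\C^{2}$ the curve $\gamma_0$ is the exceptional curve of the minimal resolution of the transverse $A_1$-singularity, giving $[\pa\xn]\cdot\gamma_0=-2$. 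For the other side, for $\xi\in\gamma_0$ one has $H^{0}(\oo_\xi)=\bigl(\C\oplus T^{*}_{y}X/L_\xi\bigr)\oplus\bigoplus_{i\geq 3}\C_{p_i}$ with $L_\xi=I_\xi/\mathfrak{m}_{y}^{2}\subset T^{*}_{y}X$, so as $\xi$ runs over $\gamma_0=\mathbb{P}(T^{*}_{y}X)$ the bundle $\T^{[n]}_{|\gamma_0}$ is an extension of the universal quotient $(T^{*}_{y}X\otimes\oo)/\oo(-1)\simeq\oo_{\mathbb{P}^{1}}(1)$ by $\oo_{\mathbb{P}^{1}}^{\oplus(n-1)}$, whence $c_1(\T^{[n]})\cdot\gamma_0=1$. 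Therefore $\lambda=-\tfrac12$, which is the claim. The only non-formal point --- and the one I expect to require the most care --- is the assertion that the homeomorphism of Proposition \ref{PropUnSectionUnSubsectionUn} can be chosen so as to carry $\T^{[n]}$ to the integrable tautological bundle $\oo^{[n]}$: as in the proof of Proposition \ref{prop}, the space of relative holomorphic structures on $\T$ compatible with $J^{\re}_{n}$ is contractible, so $c_1(\T^{[n]})$ may be computed with the structure restricting fibrewise to the standard $\ba{\pa\he}$ on $\C^{2}$, which is exactly what the local computation uses; everything else is as in the integrable case.
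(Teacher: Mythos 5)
Your proof is correct, and its first two steps coincide with the paper's: the vanishing of $c_{1}\he(\T^{[n]}\be)$ on $\xn\setminus\pa\xn$ (the paper pulls $\T^{[n]}\be$ back along the $\mathfrak{S}_{n}$-covering $\sigma$ from the configuration space, where it trivializes, and concludes that the class is torsion; your flat-local-system phrasing is the same argument), and the resulting proportionality $c_{1}\he(\T^{[n]}\be)=\mu\, PD^{-1}\be\bigl([\pa\xn]\bigr)$, for which you supply the Poincar\'e--Lefschetz justification the paper leaves implicit. Where you genuinely diverge is the computation of the coefficient. The paper localizes at a generic point of the big diagonal, reduces to $n=2$, and pulls back along the branched double cover $\apl{\pi}{Bl_{\Delta}\he(U\tim U)}{U^{[2]}\be}$, using $\pi\ee\bigl[\pa U^{[2]}\be\bigr]=2[E]$ and $\pi\ee c_{1}\he(\T^{[2]}\be)=c_{1}\he\bigl(\oo(-E)\bigr)=-[E]$; you instead pair both sides with the exceptional $\P^{1}$ fibre $\gamma_{0}$, computing $[\pa\xn]\cdot\gamma_{0}=-2$ from the $A_{1}$ transverse slice and $\deg\bigl(\T^{[n]}_{\vert\gamma_{0}}\bigr)=1$ from the universal quotient on $\P(T^{*}_{y}X)$. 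Both computations are valid and rest on the same local model supplied by Proposition \ref{PropUnSectionUnSubsectionUn}; the paper's avoids any degree computation on the exceptional curve at the price of introducing the double cover, while yours avoids the cover at the price of the two degree counts (and of checking $[\pa\xn]\cdot\gamma_{0}\neq 0$, which you do). Your closing remark --- that the comparison with the integrable tautological bundle is legitimate because the $K$-theory class is independent of the relative holomorphic structure (Proposition \ref{prop}) and the space of such structures is contractible --- is exactly the point that needs saying, and it is consistent with how the paper itself reduces all such computations to the integrable local model.
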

\begin{proof}
Let $U=\bigl\{(x_{1},\dots,x_{n}\he)\in\xn\ \textrm{such that for all}\ (i,j)\ \textrm{with}\ i\neq j, \ x_{i}\neq x_{j}\bigr\}$. Then $\xn\setminus \partial\xn$ is canonically isomorphic to $ U/\mathfrak{S}_{n}\he$. If $\apl{\sigma }{U}{\xn\setminus \partial\xn}$ is the associated quotient map, $\sigma \ee\be\T^{[n]}\be\simeq\bop_{i=1}^{n}\pr_{i}\ee\T$, so that $\sigma \ee\be\T^{[n]\be}$ is trivial. Since $\sigma $ is a finite covering map, $c_{1}(\T^{[n]})_{\vert\xn\setminus\partial\xn}\he$ is a torsion class, so it is zero in
$H^{2}\bigl( \xn\setminus\partial\xn,\Q\bigr)$.
This implies that $c_{1}\he(\T^{[n]}\be)=\mu \, PD^{-1}\be\bigl( [\pa\xn]\bigr)$ where $\mu \in\Q$.
To compute $\mu $, we argue locally on $\snx$ around a point in the stratum
\[
S=\bigl\{\xb\in\snx\ \textrm{such that}\ x_{i}\he\not =x_{j}\he\ \textrm{except for one pair} \ \{i,j\}\bigr\}\cdot
\] This reduces the computation to the case $n=2$. Then $U^{[2]}\be =Bl_{\Delta }\he(U\tim U)/\Z_{2}\he$, where $U\suq X$ is endowed with an integrable complex structure and $\Delta $ is the diagonal of $U$. If $E\suq Bl_{\Delta }\he(U\tim U)$ is the exceptional divisor and
$\apl{\pi }{Bl_{\Delta }\he(U\tim U)}{U^{[2]}\be}$ is the projection, then
$\pi \ee\be \bigl( [\pa U^{[2]}]\bigr)=2[E]$ and $\pi \ee\be c_{1}\he(\T^{[2]})=c_{1}(\pi \ee\be \T^{[2]}\be)=c_{1}\he\bigl( \oo(-E)\bigr)=-[E]$ in $H^{2}\be\bigl(Bl_{\Delta }\he(U\tim U),\Z \bigr)$. This gives the value $\mu =-1/2$.
\end{proof}
\subsection{Tautological bundles and incidence varieties}
We want to compare the tautological bundles $E^{[n]}\be$ and $E^{[n+1]}\be$ through the incidence variety $\xna{n+1,n}$. In the integrable case, $\xna{n+1,n}$ is smooth. If $D\suq \xna{n+1,n}$ is the divisor $\ba{Z}_{1}\he$ (see (\ref{EqDeux})), we have an exact sequence (see \cite{SchHilDa}, \cite{SchHilLe}):
\begin{equation}\label{EqEtoile}
 \xymatrix{
0\ar[r]&\rho \ee\be E\oti\oo_{\xna{n+1,n}}\he(-D)\ar[r]&\nu  \ee\be E^{[n+1]}\be\ar[r]&\lambda \ee\be E^{[n]}\be\ar[r]&0,
}
\end{equation}
where $\apl{\lambda }{\xna{n+1,n}}{\xn}$, $\apl{\nu }{\xna{n+1,n}}{\xna{n+1}}$ and $\apl{\rho }{\xna{n+1,n}}{X}$ are the two natural projections and the residual map.
\par
In the almost-complex case, $\xna{n+1,n}$ is a topological manifold of dimension $4n+4$. If we choose a relative integrable structure $J^{\re }_{n+1}$ with additional properties as given in \cite{SchHilVo1}, $\xna{n+1,\, n}$ can be endowed with a differentiable structure, but we will not need it here.
\par
 Let $J^{\re }_{n}$ and $J^{\re }_{n+1}$ be two relative integrable structures in small \nbh s of $Z_{n}\he$ and $Z_{n+1}\he$. We extend them to relative structures $\check{J}_{n}^{\re}$ and $\check{J}_{n+1}^{\re}$ in small \nbh s of $Z_{n\tim(n+1)}\he$. Then
$\bigl( \xnb{[n]\tim[n+1]},\check{J}^{\re }_{n},\check{J}^{\re }_{n+1}\bigr)=
\xnj{n}\tim X^{[n+1]}_{J^{\re}_{n+1}}$. If $J^{\re }_{n\tim(n+1)}$ is a relative integrable structure in a small \nbh\ of $Z_{n\tim(n+1)}\he$
and $J^{\re }_{n\tim 1}$ is defined by $J^{\re }_{n\tim 1,\xb,p}=J^{\re }_{n\tim (n+1),\, \xb,\, \xb\, \cup\, p}$, then we have a diagram:
\[
\xymatrix@C=50pt@R=40pt{
&&X^{[n+1]}_{J^{\re}_{n+1}}\\
X^{[n+1,n]}_{J^{\re}_{n\tim 1}}\ar@/^15pt/[urr]^{\nu }\ar@{^{(}->}[r]\ar@/_15pt/[drr]_{\lambda }&\bigl( \xnb{[n]\tim[n+1]},J^{\re }_{n\tim(n+1)},J^{\re }_{n\tim(n+1)}\bigr)\ar[r]^(.55){\Phi }_(.55){\simeq}&\bigl( \xnb{[n]\tim[n+1]},\check{J}^{\re }_{n},\check{J}^{\re }_{n+1}\bigr)\ar[u]_{\pr_{1}\he}\ar[d]^{\pr_{2}\he}\\
&&\xnj{n}
}
\]
where $\Phi $ is a homeomorphism uniquely determined up to homotopy. We will denote by $D$ the inverse image of the incidence locus of $\snx\tim X$
by the map $\flgd{\xna{n+1,n}}{\snx\tim X,}$ so that  $D=\ba{Z}_{1}\he$ where $Z_{1}\he$ is defined by (\ref{EqDeux}). The cycle $D$ has a fundamental homology class in $H_{4n+2}\he\bigl( \xna{n+1,n},\Z\bigr)$. Furthermore, there exists a unique complex line bundle $F$ on $\xna{n+1,n}$ such that $PD\bigl( c_{1}\he(F)\bigr)=-[D]$.
\begin{proposition}\label{pprop}
 In $K\bigl( \xna{n+1,n}\bigr)$, the following identity holds: $\nu \ee\be E^{[n+1]}\be =\lambda \ee E^{[n]}\be +\rho \ee\be E\oti F.$
\end{proposition}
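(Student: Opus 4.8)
The plan is to deduce the identity from the integrable exact sequence (\ref{EqEtoile}) by means of the relative construction: I first produce a genuine short exact sequence of continuous complex vector bundles on $\xna{n+1,n}$ whose kernel is $\rho \ee E\oti L$ for a continuous line bundle $L$, I read it in $K$-theory, and I then identify $L$ with $F$.

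First I would fix compatible auxiliary data. By Proposition \ref{prop} the classes of $E^{[n]}$ and $E^{[n+1]}$ in $K$-theory do not depend on the chosen relative holomorphic structures, so, using the commutative diagram preceding the statement, I may work with a single relative integrable structure $J^{\re }_{n\tim(n+1)}$ in a \nbh\ of $Z_{n\tim(n+1)}\he$, with its restriction $J^{\re }_{n\tim 1}$, and a single relative holomorphic structure $\ba{\pa\he}_{E}^{\, \rel}$ on $E$ compatible with $J^{\re }_{n\tim(n+1)}$; then $\lambda \ee E^{[n]}$ and $\nu \ee E^{[n+1]}$ are both obtained by restricting to $\xna{n+1,n}$ the associated relative tautological bundles built over $\snx\tim X$.

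Next I would build the exact sequence. Over a point $(\xi ,\xi ',\xb,p)$ of $\xna{n+1,n}$ the schemes $\xi \suq\xi '$ lie in the common fibre $\bigl( W_{\xb\, \cup\, p}\he, J^{\re }_{n\tim 1,\xb,p}\bigr)$, on which $E$ carries a holomorphic structure varying continuously with $(\xb,p)$. Restriction of relative holomorphic sections defines a morphism of continuous complex vector bundles $\nu \ee E^{[n+1]}\to\lambda \ee E^{[n]}$ which over each point is the restriction $H^{0}(\xi ',E_{\vert\xi '})\to H^{0}(\xi ,E_{\vert\xi })$; since $\xi $ and $\xi '$ are $0$-dimensional this map is onto, of constant rank $n\,\rk E$, and its kernel is a continuous vector bundle of rank $\rk E$ with fibre $E_{p}\oti\bigl( \ii_{\xi }/\ii_{\xi '}\bigr)_{p}$, i.e.\ a copy of $E_{p}$. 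In relative holomorphic coordinates this is exactly the sequence (\ref{EqEtoile}), so the kernel is locally of the form $\rho \ee E\oti\oo(-D)$; since the kernel and $\rho \ee E$ are globally defined, these local descriptions patch and yield an exact sequence of continuous complex vector bundles
\[
\xymatrix@C=17pt{0\ar[r]&\rho \ee E\oti L\ar[r]&\nu \ee E^{[n+1]}\ar[r]&\lambda \ee E^{[n]}\ar[r]&0}
\]
for a continuous complex line bundle $L$ on $\xna{n+1,n}$. Taking classes in $K\bigl( \xna{n+1,n}\bigr)$ gives $\nu \ee E^{[n+1]}=\lambda \ee E^{[n]}+\rho \ee E\oti L$.

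It remains to prove $L\simeq F$, which is the only substantial point. Since $D$ is intrinsically defined as the preimage of the incidence locus of $\snx\tim X$ and is cut out by a single holomorphic equation in each local holomorphic model, it is a Cartier-type divisor on $\xna{n+1,n}$ and $L$ is globally its associated line bundle $\oo(-D)$, so $PD\bigl( c_{1}\he(L)\bigr)=-[D]=PD\bigl( c_{1}\he(F)\bigr)$. Equivalently, off $D$ the residual subscheme $\ii_{\xi }/\ii_{\xi '}$ is reduced and supported at a single point distinct from $\supp(\xi )$, so $\bigl( \ii_{\xi }/\ii_{\xi '}\bigr)_{p}\simeq\C$ canonically and $L$ is canonically trivial on $\xna{n+1,n}\setminus D$; hence $PD\bigl( c_{1}\he(L)\bigr)=m\,.\,[D]$ with $m\in\Z$, by Poincar\'{e}--Lefschetz duality together with $H_{4n+2}\he(D,\Z)\simeq\Z\,.\,[D]$ (proved as in Lemma \ref{LemUnSectionDeuxSubsectionUn}), and computing the multiplicity $m$ in one chart, where $L=\oo(-D)$ with $D$ reduced, gives $m=-1$. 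In either case $c_{1}\he(L)=c_{1}\he(F)$, and since topological complex line bundles on a $CW$-complex are classified by their first Chern class, $L\simeq F$. Substituting into the $K$-theory identity gives $\nu \ee E^{[n+1]}=\lambda \ee E^{[n]}+\rho \ee E\oti F$. The main obstacle is exactly this globalisation: assembling the local sequences (\ref{EqEtoile}) and the local line bundles $\oo(-D)$ into a single exact sequence and a single line bundle on the merely topological manifold $\xna{n+1,n}$, with control on the Chern class of the latter.
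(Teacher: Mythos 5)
Your proposal is correct and follows essentially the same route as the paper: put the integrable exact sequence (\ref{EqEtoile}) in families over $\snx\tim X$ using a relative holomorphic structure on $E$, restrict to $\xna{n+1,n}$, read the resulting short exact sequence in $K$-theory, and identify the kernel's twisting line bundle with $F$ by showing it is trivial off $D$ and computing the local multiplicity $-1$. The one place where the paper is more careful than you are is the identification of the middle and quotient terms of the family sequence with $\nu\ee\be E^{[n+1]}\be$ and $\lambda\ee\be E^{[n]}\be$: a single choice of $J^{\re}_{n\tim 1}$ and $\ba{\pa\he}^{\,\rel}_{E}$ cannot in general be compatible simultaneously with $J^{\re}_{n+1}$ near $\xb\cup p$ and with $J^{\re}_{n}$ near $\xb$ (the two conditions conflict on the overlap), so the paper makes two separate compatible choices, one for each identification, and uses deformation invariance of the $K$-classes on $\xna{n+1,n}$ (the relative analogue of Proposition \ref{prop}, not that proposition itself, which only concerns $K(\xn)$) to conclude that both identities hold for any choice.
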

\begin{proof}
Let $\ba{\pa\he}^{\, \rel}_{E,n\tim 1}$, $\ba{\pa\he}^{\, \rel}_{E,n}$ and $\ba{\pa\he}^{\, \rel}_{E,n+1}$ be relative holomorphic structures on $E$ compatible with $J_{n\tim 1}^{\, \rel}$,
$J_{n}^{\, \rel}$ and $J_{n+1}^{\, \rel}$.
 For each $(\xb,p)\in\snx\tim X$, we consider the exact sequence (\ref{EqEtoile}) on $\bigl( W_{\xb,p}\he,J^{\, \rel}_{n\tim 1,\xb,p}\bigr)$ for the holomorphic vector bundle
$\bigl( E_{\vert W_{\xb,p}\he}\he,\ba{\pa\he}^{\, \rel}_{E,n\tim 1,\xb,p}\bigr)$.
Putting these exact sequences in families over $\snx\tim X,$ and restricting it to $\xna{n+1,n}$, we get an exact sequence
$\sutrgdpt{\rho \ee\be E\oti G}{A}{B}{,}$ where $G$ is a complex line bundle on $\xna{n+1,n}$ and $A$ and $B$ are two vector bundles on $\xna{n+1,n}$ such that for all $ (\xb,p)$ in $\snx\tim X$:
\begin{equation}\label{EqCinq}
A_{\vert\, \xi ,\xi ',\xb,p}\he=\Bigl( E_{\vert\, \xi '}^{[n+1]},
\ba{\pa\he}^{\, \rel}_{E,n\tim 1,\xb,p},J^{\re }_{n\tim 1,\xb,p}\Bigr),\quad
B_{\vert\, \xi ,\xi ',\xb,p}\he=\Bigl( E_{\vert\, \xi }^{[n]},
\ba{\pa\he}^{\, \rel}_{E,n\tim 1,\xb,p},J^{\re }_{n\tim 1,\xb,p}\Bigr).
\end{equation}
Now, $\Phi $ is given by $\Phi (\xi ,\xi ',\ub{u},\ub{v})=\bigl( \phi _{\ub{u},\ub{v}*}\he\xi ,S^{n}\be\phi _{\ub{u},\ub{v}}\he(\ub{u}),\psi _{\ub{u},\ub{v}*}\he\xi ',S^{n+1}\be \psi _{\ub{u},\ub{v}*}\he(\ub{v})\bigr)$. Thus
\begin{align*}
\nu \ee\be E^{[n+1]}_{\vert\, \xi ,\xi ',\xb,p}&=\Bigl( E^{[n+1]}_{\vert\, \psi _{\xb,\xb\, \cup\, p*}\he \xi '},\ba{\pa\he}^{\, \rel}_{E,n+1,S^{n+1}\be\psi _{\xb,\xb\, \cup\, p}\he(\xb\, \cup\, p)},{J}^{\re }_{n+1,S^{n+1}\be \psi _{\xb,\xb\, \cup\, p}\he(\xb\, \cup\, p)}\Bigr),\\
\lambda  \ee\be E^{[n]}_{\vert\, \xi ,\xi ',\xb,x}&=\Bigl( E^{[n]}_{\vert\, \phi _{\xb,\xb\, \cup\, p*}\he\xi },\ba{\pa\he}^{\, \rel}_{E,n,S^{n}\be\phi _{\xb,\xb\, \cup\, p}\he(\xb)},{J}^{\re }_{n,S^{n}\be \phi _{\xb,\xb\, \cup\, p}\he(\xb)}\Bigr).
\end{align*}
As in Proposition \ref{prop}, the classes $A$ and $B$ in $K\bigl( \xna{n+1,n}\bigr)$ are independent of  the structures used to define them. If $J^{\re }_{n\tim(n+1)}=\check{J}^{\re }_{n+1}$ and for all $(\xb,p)$ in $\snx\tim X$,
$\ba{\pa\he}^{\, \rel}_{E,n\tim 1,\xb,p}=\ba{\pa\he}^{\, \rel}_{E,n\tim 1,\xb\, \cup\,  p}$, we can take $\psi _{\ub{u},\ub{v}}\he=\id$ in a \nbh\ of $\ub{v}$. Thus $A=\nu \ee\be E^{[n+1]}\be$. On the other way, if $J^{\re }_{n\times(n+1)}=\check{J}^{\re }_{n}$ and
for all
$(\xb,p)$ in $\snx\tim X$,
$\ba{\pa\he}^{\, \rel}_{E,n\tim 1,\xb,p}=\ba{\pa\he}^{\, \rel}_{E,n,\xb}$ in a \nbh\ of $\xb$, we can take $\phi _{\ub{u},\ub{v}}\he=\id$ in a \nbh\ of $\ub{u}$. Thus
$B=\lambda \ee\be E^{[n]}\be$. This proves that $\nu \ee\be E^{[n+1]}\be-\lambda \ee\be E^{[n]}\be=\rho \ee\be E\oti G$ in $K\bigl( \xna{n+1,n}\bigr)$.
If $\T$ is the trivial complex line bundle on $X$, $\nu \ee\be \T^{[n+1]}\be\simeq\lambda \ee\be \T^{[n]}\be\oplus\rho \ee\be \T$ on $\xna{n+1,n}\backslash D$. Thus $G$ is trivial outside $D$. This yields
$PD(c_{1}\he(G))=\mu [D]$, where $\mu \in\Q$ and the computation of $\mu $ is local, as in Lemma \ref{lem}, so that $\mu =-1$.
\end{proof}
If $X$ is a projective surface, the subring of $H\ee\be(\xn,\Q)$ generated by the classes
$\ch_{k}\he(E^{[n]}\be)$ (where $E$ runs through all the algebraic vector bundles on $X$) is called the \emph{ring of algebraic classes of $\xn$}. If $(X,J)$ is an almost-complex compact fourfold, we can in the same manner consider the subring of $H\ee\be(\xn,\Q)$ generated by the classes $\ch_{k}\he(E^{[n]}\be)$, where $E$ runs through all the complex vector bundles on $X$. If $X$ is projective, this ring is much bigger than the ring of the algebraic classes. In a forthcoming paper, we will show that it is indeed equal to $H\ee\be(\xn,\Q)$ if $X $ is a symplectic compact fourfold satisfying $b_{1}(X)=0$, and we will describe the ring structure of $H\ee\be(\xn,\Q)$.



\begin{thebibliography}{De-Be-Be-Ga}
\bibitem[Br]{SchHilBr}
\textsc{J. Brian\c{c}on,}
\newblock Description de Hilb$^{n}\be \C\{x,y\}$,
Invent. Math., \textit{41}, (1977), 45--89.
\bibitem[De-Be-Be-Ga]{SchHilDBBG}
\textsc{P. Deligne, A. Beilinson, J. Bernstein, O. Gabber,}
\newblock Faisceaux pervers,
Ast\'{e}risque, Soc. Math. France, \textit{100}, (1983).
\bibitem[Da]{SchHilDa}
\textsc{G. Danila,}
\newblock Sur la cohomologie d'un fibr\'{e} tautologique sur le sch\'{e}ma de Hilbert d'une surface, J. Algebraic Geom. \textit{10}, (2001), 247--280.
\bibitem[El-St]{SchHilStr}
\textsc{G. Ellingsrud, S.-A. Str\o mme,}
\newblock On the cohomology of the Hilbert schemes of points in the plane,
Invent. Math., \textit{87}, (1987), 343--352.
\bibitem[Fo]{SchHilFo}
\textsc{J. Fogarty,}
\newblock Algebraic families on an algebraic surface,
Am. J. Math., \textit{10}, (1968), 511--521.
\bibitem[G\"{o}]{SchHilGo2}
\textsc{L. G\"{o}ttsche,}
\newblock The Betti numbers of the Hilbert scheme of points on a smooth projective surface,
Math. Ann., \textit{286}, (1990), 193--207.
\bibitem[G\"{o}-So]{SchHilGoSo}
\textsc{L. G\"{o}ttsche, W. Soergel,}
\newblock Perverse sheaves and the cohomology of Hilbert schemes on smooth algebraic surfaces,
Math. Ann., \textit{296}, (1993), 235--245.
\bibitem[Gri]{SchHilGri}
\textrm{J. Grivaux,}
\newblock Th\`{e}se de Doctorat. Universit\'{e} Pierre et Marie Curie, Paris (2009).
\bibitem[Gro]{SchHilGr}
\textsc{I. Grojnowski,}
\newblock Instantons and affine algebras I: the Hilbert scheme and vertex operators
Math. Res. Letters, \textit{3}, (1996), 275--291.
\bibitem[Le]{SchHilLe}
\textsc{M. Lehn,}
\newblock Chern classes of tautological sheaves on Hilbert schemes of points on surfaces,
Invent. Math., \textit{136}, (1999), 157--207.
\bibitem[LP]{SchHilLP}
\textsc{J. Le Potier,}
\newblock La formule de G\"{o}ttsche,
Unpublished lecture notes.
\bibitem[Na]{SchHilNa}
\textsc{H. Nakajima,}
\newblock Heisenberg algebra and Hilbert schemes on projective surfaces,
Ann. Math,  \textit{145}, (1997), 379--388.
\bibitem[Ti]{SchHilTi}
\textsc{A. Tikhomirov,}
\newblock The variety of complex pairs of zero-dimensional subschemes of an algebraic surface,
Izvestiya RAN: Ser. Mat \textit{61}, (1997) 153--180. = Izvestiya: Mathematics, \textit{61} (1997), 1265--1291.
\bibitem[Vo 1]{SchHilVo1}
\textsc{C. Voisin,}
\newblock On the Hilbert scheme of points on an almost complex fourfold,
Ann. Inst. Fourier, \textit{50,2}, (2000), 689--722.
\bibitem[Vo 2]{SchHilVo2}
\textsc{C. Voisin,}
\newblock On the punctual Hilbert scheme of a symplectic fourfold,
Contemporary Mathematics, (2002), 265--289.
\bibitem[Vo 3]{SchHilVo3}
\textsc{C. Voisin,}
\newblock Th\'{e}orie de Hodge et g\'{e}om\'{e}trie alg\'{e}brique
complexe,
Soci\'{e}t\'{e} Math\'{e}matique de France, (2002).
\end{thebibliography}
\end{document}